\begin{document}

\newtheorem{theorem}{Theorem}[section]
\newtheorem{lemma}[theorem]{Lemma}
\newtheorem{proposition}[theorem]{Proposition}

\newtheorem{statement}[theorem]{Statement}
\newtheorem{conjecture}[theorem]{Conjecture}
\newtheorem{problem}[theorem]{Problem}
\newtheorem{corollary}[theorem]{Corollary}

\newtheorem*{maintheorem1}{Theorem A}
\newtheorem*{maintheorem2}{Theorem B}

\newtheoremstyle{neosn}{0.5\topsep}{0.5\topsep}{\rm}{}{\bf}{.}{ }{\thmname{#1}\thmnumber{ #2}\thmnote{ {\mdseries#3}}}
\theoremstyle{neosn}
\newtheorem{remark}[theorem]{Remark}
\newtheorem{definition}[theorem]{Definition}
\newtheorem{example}[theorem]{Example}

\numberwithin{equation}{section}

\newcommand{\Aut}{\,\mathrm{Aut}\,}
\newcommand{\Inn}{\,\mathrm{Inn}\,}
\newcommand{\End}{\,\mathrm{End}\,}
\newcommand{\Out}{\,\mathrm{Out}\,}
\newcommand{\Hom}{\,\mathrm{Hom}\,}
\newcommand{\ad}{\,\mathrm{ad}\,}
\newcommand{\SLn}{$\rm sl(n)$}
\newcommand{\calL}{{\mathcal L}}
\renewenvironment{proof}{\noindent \textbf{Proof.}}{$\blacksquare$}
\newcommand{\GL}{\,\mathrm{GL}\,}
\newcommand{\SL}{\,\mathrm{SL}\,}
\newcommand{\PGL}{\,\mathrm{PGL}\,}
\newcommand{\PSL}{\,\mathrm{PSL}\,}
\newcommand{\PSO}{\,\mathrm{PSO}\,}
\newcommand{\SO}{\,\mathrm{SO}\,}
\newcommand{\Sp}{\,\mathrm{Sp}\,}
\newcommand{\PSp}{\,\mathrm{PSp}\,}
\newcommand{\UT}{\,\mathrm{UT}\,}
\newcommand{\Exp}{\,\mathrm{Exp}\,}
\newcommand{\Rad}{\,\mathrm{Rad}\,}
\newcommand{\charr}{\mathrm{char}\,}
\newcommand{\tr}{\mathrm{tr}\,}
\newcommand{\diag}{{\rm diag}}

\font\cyr=wncyr10 scaled \magstep1%

\def\Sha{\text{\cyr Sh}}

\title{Types in torsion free Abelian groups}

\keywords{Types, isotypic equivalence, elementary equivalence, torsion free Abelian groups} 
\subjclass[2020]{03C52, 20K10}

\author{Elena Bunina}
\date{}
\address{Department of Mathematics, Bar--Ilan University, 5290002 Ramat Gan, ISRAEL}
\email{helenbunina@gmail.com}

\begin{abstract}
	In this paper we study (logical) types and isotypical equivalence of torsion free Abelian  groups. We describe all possible types of elements and standard $2$-tuples of elements in these groups and classify separable torsion free Abelian groups up to isotypicity. 
\end{abstract}

\maketitle

\section{Introduction}\leavevmode

In this paper we study  torsion free Abelian groups~$A$ from the point of view of (logical) types and isotypical equivalence. 

\begin{definition}
	Let $G$ be a group and $(g_1, \dots , g_n)$ a tuple of its elements. The {\bf type} of this
	tuple in~$G$, denoted $\mathrm{tp}^G(g_1, \dots , g_n)$, is the set of all first order formulas in free
	variables $x_1, \dots , x_n$ in the standard group theory language  which are
	true on $(g_1, \dots , g_n)$ in~$G$ (see~\cite{Mya2} or \cite{Mya3} for details). 
\end{definition}

\medskip

\begin{definition}
	The set of all types of tuples
	of elements of~$G$ is denoted by $\mathrm{tp}(G)$. Following~\cite{Mya7}, we say that two groups
	$G$ and $H$ are {\bf isotypic} if $\mathrm{tp}(G) = \mathrm{tp}(H)$, i.\,e., if any type realized in~$G$ is realized in~$H$, and vice versa.
\end{definition}

\medskip

Isotypic groups appear naturally in
logical (algebraic) geometry over groups which was developed in the works of
B.\,I.\,Plotkin and his co-authors (see~\cite{Mya5,Mya6,Mya7} for details), they play an important
role in this subject. In particular, it turns out that two groups are logically equivalent if and only if they have the same sets of realizable types. So there arise  two fundamental algebraic
questions which are interesting in its own right: what are possible types of
elements in a given group~$G$ and how much of the algebraic structure of~$G$ is
determined by the types of its elements?

Isotypicity property of groups is related to the elementary equivalence property,
though it is stronger. Indeed, two isotypic groups are elementarily equivalent,
but the converse does not hold. For example, if we denote by $F_n$ a free
group of finite rank~$n$, then groups $F_n$ and $F_m$ for $2\leqslant m < n$ are elementarily
equivalent~\cite{Mya22,Mya23}, but not isotypic~\cite{Myas}. Furthermore, Theorem~1 from~\cite{Myas}
shows that two finitely generated isotypic nilpotent groups are isomorphic,
but there are examples, due to Zilber, of two elementarily equivalent non-isomorphic
finitely generated nilpotent of class~$2$ groups~\cite{Mya24}.

Isotypicity is a very strong relation on groups, which quite often implies
their isomorphism. This explains the need of the following definition. 

\begin{definition}
	We say
	that a group $G$ is \emph{defined by its types} if every group isotypic to~$G$ is isomorphic
	to~$G$. 
\end{definition}

It was noticed in~\cite{Myas} that every finitely generated group $G$ which
is defined by its types satisfies a (formally) stronger property. 
Namely, we say
that 

\begin{definition}
	A finitely generated group $G$ is \emph{strongly defined by types} if for any isotypic
	to~$G$ group~$H$ every elementary embedding $G \to H$ is an isomorphism.
\end{definition}

Miasnikov and Romanovsky in the paper~\cite{Myas}  proved that 

1) every virtually polycyclic group is strongly defined by its types;

2) every finitely generated metabelian group is strongly defined by
its types;

3) every finitely generated rigid group is strongly defined by its types.
In particular, every free solvable group of finite rank is strongly defined by its
types.

\smallskip

R.\,Sclinos (unpublished) proved that finitely generated homogeneous groups are defined by types. Moreover, finitely generated co-hopfian and finitely presented hopfian groups are defined by types. Nevertheless, the main problem in the area remains widely open:

\begin{problem}[\cite{Mya6}] 
	Is it true that every finitely generated group is defined by types?
\end{problem}

In the recent paper~\cite{Gvozd_new} Gvozdevsky proved that any field of finite transcendence degree over a prime subfield is defined by types.
Also he gave  several interesting  examples of certain countable isotypic but not isomorphic
structures: totally ordered sets, rings, and groups.

In the papers \cite{Abelian1} and \cite{Abelian2} the author studied periodic Abelian groups, their types and isotypical equivalence. For periodic Abelian groups there exists complete classification of such groups up to isotypicity. The following theorem was proved in \cite{Abelian2}:

\begin{theorem}
	Two periodic Abelian groups $A_1$ and $A_2$ are isotypically equivalent if and only if for every prime~$p$ their $p$-basic subgroups are elementarily equivalent and for any prime~$p$ and natural~$n$ 
	$$
	\dim (p^{n-1} A_1^{p,\infty} )[p] = 	\dim (p^{n-1} A_2^{p,\infty} )[p],
	$$
	where $A^{p,\infty}$ consists of all elements of infinite $p$-height in the $p$-component of~$A$.  
\end{theorem}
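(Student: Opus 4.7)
The plan is to reduce to a single prime $p$ via the primary decomposition $A = \bigoplus_p A_p$, using that each $A_p$ is first-order definable in $A$ as the set of elements killed by some power of $p$. Thus every type of a tuple $(g_1,\dots,g_n)$ in $A$ is essentially a product of types of its $p$-primary projections in the respective $A_p$, and the problem splits over primes. For a single $p$-group $A_p$, the two invariants in the statement have a natural structural meaning: the elementary theory of the $p$-basic subgroup $B_p$ records the Szmielew/Ulm--Kaplansky data of the reduced finite-height part, whereas the dimensions $\dim(p^{n-1}A^{p,\infty})[p]$ filter, layer by layer, the first Ulm subgroup $A^{p,\infty} = \bigcap_{n\geq 0} p^n A_p$.

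For the forward direction, assume $A_1$ and $A_2$ are isotypic, so in particular elementarily equivalent. Szmielew's theorem then forces equality of all finite Ulm--Kaplansky invariants, which is precisely the condition $B_{p,1} \equiv B_{p,2}$. The cardinal dimensions $\dim(p^{n-1}A_i^{p,\infty})[p]$ are not themselves captured by a single first-order sentence, but for any finite or countable dimension $\kappa$ realized in $A_1$ one can write down the type of an appropriate system of linearly independent elements of $(p^{n-1}A^{p,\infty})[p]$ in $A_1$; realizability of this type in $A_2$ transfers the dimension. Handling dimensions of arbitrary cardinality reduces via a standard compactness and saturation argument to the countable case.

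For the backward direction I would appeal to the explicit classification of types of single elements and of standard $2$-tuples in a $p$-group, proved in \cite{Abelian1} and \cite{Abelian2}. Each such type is parameterized by a height sequence together with a flag indicating whether the element (or a combination of the two) lies in $A^{p,\infty}$, and its realizability in $A_i$ depends only on (i) the Ulm--Kaplansky invariants encoded by $\mathrm{Th}(B_p)$ and (ii) the layer-dimensions of $A^{p,\infty}$. Matching both invariants therefore guarantees the same single-element and standard $2$-element types are realized in $A_1$ and $A_2$.

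The hard part will be lifting this from $1$- and $2$-tuples to arbitrary $n$-tuples. One must show that the joint type of $(g_1,\dots,g_n)$ in an abelian $p$-group is reconstructible from the types of its linear combinations, viewed through the lens of standard $2$-tuples — essentially a back-and-forth / amalgamation argument showing that given any $n$-tuple in $A_1$ realizing a certain configuration of heights and infinite-height relations, the matching invariants furnish an $n$-tuple in $A_2$ with the same joint type. The delicate bookkeeping here concerns the interaction between height sequences along all $\mathbb{Z}$-linear combinations and the filtration of $A^{p,\infty}$, and this is where the combined strength of the two conditions must be exploited in full.
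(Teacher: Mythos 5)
This theorem is not proved in the present paper at all: it is quoted from \cite{Abelian2} as background for the torsion free case, so there is no in-paper argument to compare yours against. Judged on its own terms, your outline has the right global shape (split over primes, read the basic-subgroup data off the Szmielew invariants, transfer the layer dimensions of $A^{p,\infty}$ by realizing types of witnessing tuples), but it contains concrete gaps.

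First, your reduction to a single prime rests on a false claim: $A_p$ is \emph{not} first-order definable in $A$, because ``killed by some power of $p$'' is an infinite disjunction. The reduction can be saved (every element of a periodic group is a finite sum of primary components, each of which is an integer multiple of the element, and by the Baur--Pr\"ufer elimination the non-sentential part of a type consists of equations and conditions $p^n \mid \alpha_1 x_1+\dots+\alpha_k x_k$, which split over primes), but that argument has to be made. Second, membership in $p^{n-1}A^{p,\infty}$ is not visible in the $1$-type of an element: the type of $g$ records only the conditions $p^k\mid \alpha g$ and $\alpha g=0$, so it cannot distinguish height $\omega$ from height $\omega+n-1$. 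To transfer $\dim(p^{n-1}A^{p,\infty})[p]\geqslant m$ you must realize the type of a $2m$-tuple carrying explicit witnesses $h_i$ with $p^{n-1}h_i=g_i$ and $p^k\mid h_i$ for all $k$; your ``flag'' remark gestures at this, but the forward direction as written transfers the wrong data. Relatedly, equality of these dimensions can only be meant as ``finite and equal, or both infinite''; no type-theoretic argument separates infinite cardinalities, so the compactness/saturation remark does not yield literal cardinal equality. Third, and most seriously, the backward direction --- which is the actual content of the theorem --- is deferred to a back-and-forth argument on $n$-tuples that is not carried out, and the classification of $1$- and $2$-types you propose to invoke is itself taken from \cite{Abelian1} and \cite{Abelian2}, i.e., from the very source whose theorem you are trying to reprove. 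As it stands this is a plausible plan rather than a proof.
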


Theoretically in this paper our goal could be to show that two torsion free Abelian groups are isotypically equivalent if and only if some their numerical invariants coincide (as in the previous theorem), but in fact it is impossible, because the variety of different logical types of $n$-tuples of elements in these groups is too rich: it contains continuum of parameters and different families of these paramenters can appear in different groups. 

In particular, $1$-types are described  naturally and clearly, $2$-types are described in a more complicated manner, but it is possible to characterize every $2$-type by integer invariants. Even a description of $2$-types shows the compexity of this logical construction of types in torsion free Abelian groups. 

Full description of isotypicity is obtained for the specific class of torsion free Abelian groups: the class of so-called \emph{separable} groups. Of course for other specific classes of torsion free Abelian groups it is theoretically possible to find invariants for isotypical equivalence.

\section{Elementary equivalence of torsion free Abelian groups}\leavevmode

Since elementary equivalence is necessary for isotypicity, we will start with the results of elementary equivalence of Abelian groups.

\begin{definition}
Two groups are called \emph{elementarily equivalent} if their first order theories coincide.
\end{definition}

Elementary equivalent Abelian groups were completely described in 1955 by Wanda Szmielew in~\cite{a1} (see also Eclof and Fisher,~\cite{a2}).

To formulate her theorem we need to introduce a set of special invariants of Abelian groups.

Let $A$ be an Abelian group, $p$ a prime number, $A[p]$ be the subgroup of~$A$, containing all elements of~$A$ of the orders $p$ or~$1$ (it is so-called $p$-\emph{socle} of the group~$A$),
$kA$ be the subgroup of~$A$, containing all elements of the form $ka$, $a\in A$.

\medskip

{\bf The first invariant} is
$$
D(p;A):= \lim\limits_{n\to \infty} \dim ((p^nA)[p])\text{ for every prime }p.
$$

\medskip

{\bf The second invariant} is
$$
Tf (p;A):= \lim\limits_{n\to \infty} \dim (p^nA/p^{n+1}A).
$$

\medskip

{\bf The third invariant } is
$$
U(p,n-1;A):= \dim ((p^{n-1}A)[p] / (p^nA)[p]).
$$

\medskip

{\bf The last invariant} is $\Exp(A)$ which is an \emph{exponent} of~$A$ (the smallest natural number  $n$ such that $\forall a\in A\, na=0$).

\begin{theorem}[Szmielew theorem on elementary classification of Abelian groups,~\cite{a1}]\label{EE-Abelian}
Two Abelian groups  $A_1$ and $A_2$ are elementarily equivalent if and only if their elementary invariants  $D(p;\cdot)$, $Tf (p;\cdot)$, $U(p,n-1;\cdot)$ and $\Exp(\cdot)$
pairwise coincide for all natural $n$ and prime~$p$ \emph{(}more precisely, they are either finite and coincide or simultaneously are equal to infinity\emph{)}.
\end{theorem}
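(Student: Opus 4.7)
The plan is to prove both implications of the biconditional. The forward direction is routine, consisting of explicit first-order encoding, while the backward direction requires substantial model-theoretic input. For the forward direction I would show that each Szmielew invariant is captured by a family of first-order sentences. For fixed $p$, $n$, $k$, the assertion $\dim((p^nA)[p])\geq k$ is expressible as
$$
\exists x_1,\dots,x_k,y_1,\dots,y_k\,\Bigl(\bigwedge_{i=1}^k (px_i=0 \wedge p^n y_i=x_i) \wedge \bigwedge_{(a_i)\in \mathbb{F}_p^k\setminus\{0\}}\Bigl(\sum_{i=1}^k a_i x_i \neq 0\Bigr)\Bigr),
$$
where the last conjunction encodes $\mathbb{F}_p$-linear independence of $x_1,\dots,x_k$ and is a finite conjunction of $p^k-1$ inequalities. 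Analogous sentences express $\dim(p^nA/p^{n+1}A)\geq k$ and $\dim((p^{n-1}A)[p]/(p^nA)[p])\geq k$, while $\Exp(A)\leq n$ is simply $\forall x\,(nx=0)$. Since $\dim((p^nA)[p])$ is monotone decreasing in~$n$, the condition $D(p;A)\geq k$ holds iff the displayed sentence holds for every~$n$. Hence $A_1\equiv A_2$ forces all four families of invariants to coincide.

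For the backward direction I would follow the Eklof--Fisher approach via positive primitive (p.p.)\ formulas. The structural input is that, modulo the theory of Abelian groups, every formula is equivalent to a Boolean combination of p.p.\ formulas, i.e.\ formulas of the form $\exists \bar y\,\bigwedge_j t_j(\bar x,\bar y)=0$ with integer-linear $t_j$. In one free variable such a $\varphi(x)$ defines a subgroup $\varphi(A)\leq A$ determined by divisibility and torsion data at finitely many primes, and a p.p.\ sentence essentially asks about the indices $[\varphi(A):\psi(A)]$ for nested p.p.\ subgroups. A direct local computation expresses each such index as a combination of the Szmielew dimensions $D(p;A)$, $Tf(p;A)$, $U(p,n-1;A)$ and $\Exp(A)$. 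Therefore coincidence of the invariants forces the same truth value in $A_1$ and $A_2$ for every p.p.\ sentence, and the Boolean normal form yields $A_1\equiv A_2$.

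The main obstacle is the p.p.\ elimination step itself. A clean modern route is to pass to $\aleph_1$-saturated elementary extensions $A_1^*\succeq A_1$ and $A_2^*\succeq A_2$ and run a back-and-forth argument matching elements $a\in A_1^*$ with $b\in A_2^*$ that share the same height profile $(h_p(a),h_p(pa),\dots)$ and torsion type at each prime; saturation supplies the witnesses required at every step, while the coincidence of the Szmielew invariants guarantees enough room in both extensions to carry out the matching. An alternative, closer to Szmielew's original argument, is first to show every Abelian group is elementarily equivalent to a direct sum built from the pieces $\mathbb{Z}(p^n)$, $\mathbb{Z}(p^\infty)$, $\mathbb{Z}_{(p)}$, and $\mathbb{Q}$ whose multiplicities are encoded by the invariants, and then invoke a Feferman--Vaught decomposition theorem for direct sums.
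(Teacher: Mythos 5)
The paper does not actually prove this statement: it is quoted as Szmielew's classical theorem with a citation to \cite{a1} (see also Eklof--Fisher \cite{a2}) and is used downstream only through its corollary for torsion free groups, where the sole surviving invariant is $Tf(p;\cdot)$. So there is no in-paper proof to compare yours against, and your proposal should be judged as a reconstruction of the classical argument. On those terms, your forward direction is correct and complete: each condition $\dim((p^nA)[p])\geqslant k$, $\dim(p^nA/p^{n+1}A)\geqslant k$, $\dim((p^{n-1}A)[p]/(p^nA)[p])\geqslant k$ and $\Exp(A)\leqslant n$ is first-order expressible, and monotonicity in $n$ lets you capture the limits $D(p;\cdot)$ and $Tf(p;\cdot)$ by infinite families of sentences; this also explains why only the finite-versus-infinite dichotomy is detectable, matching the parenthetical in the statement. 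Your backward direction correctly identifies the standard architecture (Baur--Monk elimination down to positive primitive formulas, reduction of indices of nested pp-definable subgroups to the four invariants, or alternatively saturation plus back-and-forth, or Szmielew's reduction to standard direct sums plus Feferman--Vaught). However, the crucial step --- the ``direct local computation'' showing that every index $[\varphi(A):\psi(A)]$ of pp-definable subgroups is controlled by $D$, $Tf$, $U$ and $\Exp$ --- is asserted rather than carried out, and that is where essentially all of the work in Szmielew's and Eklof--Fisher's proofs lives. As an outline of the known proof strategy your proposal is sound; as a self-contained proof it is incomplete at exactly that point.
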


It is clear that if some group $B$ is elementarily equivalent to the given torsion free Abelian group~$A$, then $B$ is also a torsion free Abelian group.

If a group $A$ is torsion free, then $D(p;A)=0$ for all $p$, $U(p,n-1,A)=0$ for all $p$ and~$n$, $\Exp (A)=\infty$.

So the only meaningful invariants are $Tf(p;A)$ for different prime~$p$. Therefore we have the following 

\begin{corollary}
	Two torsion free Abelian groups  $A_1$ and $A_2$ are elementarily equivalent if and only if  $Tf (p;\cdot)$
	pairwise coincide for all  prime~$p$.
\end{corollary}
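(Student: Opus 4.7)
The plan is to read the corollary directly off Szmielew's theorem (Theorem~\ref{EE-Abelian}) by checking that, on the class of nontrivial torsion free Abelian groups, all Szmielew invariants except $Tf(p;\cdot)$ are forced to take the same (trivial) values.

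First, for any torsion free Abelian group $A$ and any prime $p$, the $p$-socle $A[p]$ is zero, since a nonzero element $a$ with $pa=0$ would be a nonzero torsion element, contradicting the assumption. From the inclusion $(p^nA)[p]\subseteq A[p]=0$ I immediately obtain $D(p;A)=0$ and $U(p,n-1;A)=0$ for all primes $p$ and all $n\geqslant 1$. Second, in any nontrivial torsion free $A$, multiplication by a positive integer $m$ is an injective endomorphism, so $mA\neq 0$; hence no positive integer annihilates $A$, and $\Exp(A)=\infty$.

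Consequently, the equalities $D(p;A_1)=D(p;A_2)$, $U(p,n-1;A_1)=U(p,n-1;A_2)$ and $\Exp(A_1)=\Exp(A_2)$ in Szmielew's criterion hold automatically for any pair of nontrivial torsion free Abelian groups. Applying Theorem~\ref{EE-Abelian} collapses the criterion to the single family of conditions $Tf(p;A_1)=Tf(p;A_2)$ for all primes $p$, which is precisely the statement of the corollary; the converse implication is immediate from the same theorem, since $Tf(p;\cdot)$ is itself one of the Szmielew invariants. There is essentially no obstacle here: the argument is a direct specialization of a classical result. The only mild care needed is the degenerate trivial-group case (where formally $\Exp(0)=1$, while $Tf(p;0)=0$ for every $p$), which in any reasonable reading of the corollary is either excluded or handled separately.
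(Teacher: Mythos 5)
Your proposal is correct and follows essentially the same route as the paper: it observes that for torsion free Abelian groups the invariants $D(p;\cdot)$ and $U(p,n-1;\cdot)$ vanish and $\Exp(\cdot)=\infty$, so Szmielew's criterion reduces to the single family of conditions on $Tf(p;\cdot)$. Your explicit justification via the vanishing of the $p$-socle and your remark on the degenerate trivial-group case are slightly more careful than the paper's one-line observation, but the argument is the same.
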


\section{$1$-Types of elements in  torsion free Abelian groups and $1$-isotypicity}

\subsection{Formulas in Abelian groups}\leavevmode

We will use the following elimination of quantifiers in Abelian groups:

\begin{proposition}[\cite{Mya2}, Lemma A.2.1, Pr\"ufer and \cite{Baur}, Baur]\label{LemmaA21}
	Every   formula in the language of Abelian groups  is equivalent   to a boolean combination of $\forall \exists$-sentences and a finite conjunction of formulas of the forms $\alpha_1 x_1+\dots +\alpha_k x_k=0$ and and  $p^n |  \alpha_1 x_1+\dots +\alpha_k x_k$, where $\alpha_1,\dots, \alpha_k\in \mathbb Z$,  $p$ is primes and $n$ is a natural number. 
\end{proposition}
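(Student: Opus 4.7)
The plan is to follow the classical quantifier simplification for modules due to Baur and Monk, specialized to $\mathbb Z$-modules. The argument decomposes into two largely independent parts: first, showing that every pp-formula (i.e.\ existentially quantified conjunction of atomic and divisibility relations) is equivalent to a finite conjunction of basic formulas of the two listed forms; second, showing that modulo the theory of Abelian groups every formula is equivalent to a boolean combination of pp-formulas and $\forall\exists$-sentences.

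For the first step I would take an arbitrary pp-formula
\[
\exists y_1\dots y_m \Bigl(\bigwedge_i \bigl(\textstyle\sum_j a_{ij}x_j+\sum_\ell b_{i\ell}y_\ell=0\bigr) \wedge \bigwedge_k \bigl(d_k\mid \textstyle\sum_j c_{kj}x_j+\sum_\ell e_{k\ell}y_\ell\bigr)\Bigr),
\]
reduce each modulus $d_k$ to a prime power by factoring (divisibility by a product is the conjunction of divisibilities by its prime-power factors), and then bring the integer matrix of coefficients of $y_1,\dots,y_m$ in the combined system to Smith normal form. This diagonalizes the dependence on the $y_\ell$, and each quantifier $\exists y_\ell$ can be absorbed: a diagonal equation $e\,y_\ell=L(x)$ is satisfiable iff $e\mid L(x)$, while a diagonal congruence $p^n\mid e\,y_\ell+L(x)$ is satisfiable iff $\gcd(e,p^n)\mid L(x)$. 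What remains is a conjunction of $\mathbb Z$-linear equations $L'(x)=0$ and prime-power divisibilities $p^n\mid L'(x)$ on the free variables, which is precisely the form prescribed.

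For the second step I would proceed by induction on formula complexity. The only nontrivial case is eliminating an existential quantifier $\exists y$ from a boolean combination of pp-formulas $\varphi_1(x,y),\dots,\varphi_s(x,y)$. Following Baur, one writes the combination in disjunctive normal form over the $\varphi_i$'s and their negations and invokes the key invariants observation: for two pp-formulas $\psi(y)\Rightarrow\varphi(y)$, the statement ``$[\varphi(A):\psi(A)]\geq N$'' is a single $\forall\exists$-sentence, asserting the existence of $N$ pairwise $\psi$-inequivalent elements of $\varphi(A)$, and its negation ``$[\varphi(A):\psi(A)]\leq N-1$'' is a $\forall$-sentence. The existential quantifier over each DNF cell is then computable from the pp-formulas in~$x$ alone together with finitely many such counting conditions on cosets of pp-subgroups inside the relevant $\varphi_i(x,A)$; collecting these yields the asserted boolean combination.

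The main obstacle is the Baur--Monk reduction of the second step: the combinatorial bookkeeping with DNFs of pp-formulas and the verification that the relevant counting conditions are faithfully captured by $\forall\exists$-sentences is the conceptually delicate part. The Smith-normal-form reduction of pp-formulas, although technical, is a routine exercise in integer linear algebra once the overall quantifier-simplification framework is in place.
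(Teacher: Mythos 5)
The paper does not prove this proposition: it is imported verbatim from the literature (Hodges, Lemma A.2.1, and Baur), so there is no in-paper argument to compare against. Your proposal reconstructs the standard Baur--Monk quantifier elimination for modules specialized to $\mathbb Z$-modules, which is indeed the route the cited sources take, and the second step (DNF over pp-formulas plus index sentences $[\varphi(A):\psi(A)]\geq N$, whose negations are $\forall\exists$ and which therefore sit inside a boolean combination of $\forall\exists$-sentences) is outlined correctly, if only at the level of ``follow Baur.''

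There is, however, one concrete flaw in your first step. You propose to put the coefficient matrix of $y_1,\dots,y_m$ in the \emph{combined} system of equations and congruences into Smith normal form and then absorb each $\exists y_\ell$ separately, using ``$e y_\ell=L(x)$ is solvable iff $e\mid L(x)$'' and ``$p^n\mid e y_\ell+L(x)$ is solvable iff $\gcd(e,p^n)\mid L(x)$.'' Smith normal form requires row operations, i.e.\ integer linear combinations of the conditions, and these are not sound when they mix an equation with a congruence (or congruences with different moduli); and absorbing a variable that occurs in both an equation and a congruence independently from each is false. For example, in $\mathbb Z$ the formula $\exists y\,(2y=x_1\wedge 3\mid y+x_2)$ is equivalent to $6\mid x_1+2x_2$, whereas your recipe returns $2\mid x_1\wedge \gcd(1,3)\mid x_2$, i.e.\ $2\mid x_1$, which is strictly weaker (take $x_1=2$, $x_2=0$). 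The standard repair is to first replace every congruence $d\mid t(x,y)$ by $\exists z\,(t(x,y)=dz)$ with a fresh bound variable, apply Smith normal form to the now purely equational system in all bound variables, read off conditions $e_i\mid L_i(x)$ and $L_j(x)=0$, and only then split each $e_i\mid L_i(x)$ into prime-power divisibilities via the Chinese remainder theorem (valid in every Abelian group since $mA\cap nA=mnA$ for coprime $m,n$). With that correction your first step goes through, and the rest of the plan is the standard argument.
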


According to this statement we have the following situation: 
the type of any tuple $(a_1,\dots, a_m)\in A$ (here $A$ is any Abelian group, not necessarily periodic) contains two parts: 

(1) Elementary theory of~$A$;

(2) All formulas
$$
\alpha_1 a_1+\dots +\alpha_m a_m=0,\quad p^n | \alpha_1 a_1+\dots +\alpha_m a_m,
$$
with $\alpha_1,\dots, \alpha_m\in \mathbb Z$, $p$ prime and $n$ natural, that hold in~$A$.

In other words, for any torsion free Abelian $p$-group $A$ possible types of elements describe the elementary theory of~$A$, and orders (which are zero or $\infty$) and $p$-heights of linear combinations of the corresponding elements for different prime~$p$.

\subsection{$1$-Types of torsion free Abelian groups}\leavevmode

$1$-Type of an element $a\in A$   consists of all formulas with one free variable, that are true on this element~$a$. As we saw above, for any $a\in A$ this type consists of the whole elementary theory of~$A$, all statements about orders and $p$-heights of all elements $\alpha \cdot a$, $\alpha \in \mathbb Z$. 

For a torsion-free group all orders of all these elements (except~$0$) are infinite, so we are interested only in $p$-heights, $p$ is an arbitrary prime number.

Note that for torsion free Abelian groups if $h_p(a)=k$, then $h_p (p^\ell a)=k+\ell$, so if we know  $p$-heights of~$a$ for all prime~$p$, then we know  $p$-heights of all elements $\alpha \cdot a$ for all prime~$p$.

It means that it is very useful to introduce a notion of a \emph{characteristic} or a \emph{height sequence}  of~$a$ (see~\cite{Fuks}, Chapter XIII):
$$
\chi (a)=(h_{p_1} (a),\dots, h_{p_n}(a),\dots ),
$$
where $p_1,\dots, p_n,\dots $ is a sequence of all prime numbers.

\smallskip

\begin{definition}
	Two groups are called $n$-\emph{isotypically equivalent}, if they realize the same sets of $m$-types, $m\leqslant n$. 
\end{definition}

The result about $1$-isotypicity of torsion free Abelian groups is now evident:

\begin{proposition}\label{1-types-1}
	Two torsion free Abelian groups $A_1$ and $A_2$ are $1$-isotypically equivalent if and only  if they are elementarily equivalent and for any element $a\in A_1$ there exists an element $b\in A_2$ with the same characteristic, and vice versa.
\end{proposition}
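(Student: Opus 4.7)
The plan is to reduce the equality of 1-types to the data of the elementary theory together with the characteristic, using the Pr\"ufer--Baur quantifier elimination (Proposition~\ref{LemmaA21}) cited earlier.

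The forward direction is essentially immediate from the definition. If $A_1$ and $A_2$ are $1$-isotypically equivalent then every $1$-type in one is realized in the other, and since sentences can be viewed as $1$-formulas with a vacuous free variable, $\mathrm{Th}(A_1)=\mathrm{Th}(A_2)$. Moreover, given $a\in A_1$, any $b\in A_2$ realizing $\mathrm{tp}^{A_1}(a)$ must satisfy exactly the same divisibility formulas $p^n\mid x$, hence $h_p(b)=h_p(a)$ for every prime $p$, i.e.\ $\chi(b)=\chi(a)$.

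For the converse, I would argue that the $1$-type of a single element $a$ in a torsion free Abelian group $A$ is completely determined by $\mathrm{Th}(A)$ together with $\chi(a)$. By Proposition~\ref{LemmaA21}, every formula in the free variable $x$ is equivalent, modulo $\mathrm{Th}(A)$, to a Boolean combination of sentences and of the atomic formulas $\alpha x=0$ and $p^n\mid \alpha x$ for $\alpha\in\mathbb{Z}$, $p$ prime, $n\in\mathbb{N}$. Since $A$ is torsion free, $\alpha a=0$ holds iff $\alpha=0$ or $a=0$, so the equations contribute only the single bit ``$a=0$ or not.'' For the height formulas I would verify the arithmetic identity $h_p(\alpha a)=v_p(\alpha)+h_p(a)$ (with $v_p$ the $p$-adic valuation and $\infty+k=\infty$); the only nontrivial inclusion is that if $\gcd(\beta,p)=1$ then $h_p(\beta a)=h_p(a)$, which follows from B\'ezout: writing $1=u\beta+vp$ gives $a=u(\beta a)+vp\cdot a$, so any $p$-th root of $\beta a$ yields a $p$-th root of $a$. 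Consequently, whether $p^n\mid \alpha a$ holds in $A$ is entirely determined by $v_p(\alpha)$ and the single number $h_p(a)$.

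Once this reduction is in place, the backward direction is automatic: assume $A_1\equiv A_2$ and pick any $a\in A_1$. By hypothesis there exists $b\in A_2$ with $\chi(b)=\chi(a)$, and if $a=0$ we take $b=0$ (note $\chi(0)=(\infty,\infty,\dots)$ is realized only by $0$). By the previous paragraph $b$ satisfies exactly the atomic quantifier-free formulas that $a$ does, and since $\mathrm{Th}(A_1)=\mathrm{Th}(A_2)$ the sentence part of the type agrees as well, so $\mathrm{tp}^{A_2}(b)=\mathrm{tp}^{A_1}(a)$. Symmetry gives $1$-isotypicity.

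No step looks genuinely hard; the only place that needs a small lemma rather than a one-line reference is the identity $h_p(\alpha a)=v_p(\alpha)+h_p(a)$, which is the point where torsion freeness is really used and which makes the characteristic $\chi(a)$ a complete invariant of the quantifier-free data in one variable.
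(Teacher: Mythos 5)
Your argument is the one the paper intends: the proposition is stated as ``evident'' immediately after exactly the observations you spell out (the Pr\"ufer--Baur elimination reducing a $1$-type to $\mathrm{Th}(A)$ plus divisibility data, and the identity $h_p(\alpha a)=v_p(\alpha)+h_p(a)$, which the paper records in the form $h_p(p^\ell a)=h_p(a)+\ell$), so you are simply supplying the details the paper omits, and the core of your reduction is correct. There is, however, one false parenthetical, and it sits on the only delicate point of the whole argument: the characteristic $(\infty,\infty,\dots)$ is \emph{not} realized only by $0$ --- every nonzero element of $\mathbb{Q}$ has it, as does any element of infinite $p$-height for all $p$. This matters because the hypothesis ``for any $a\in A_1$ there is $b\in A_2$ with the same characteristic'' could be witnessed by pairing a nonzero $a$ of characteristic $(\infty,\infty,\dots)$ with $b=0$, whose type differs from that of $a$ by the formula $x=0$. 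Concretely, $A_1=\mathbb{Q}\oplus\mathbb{Z}$ and $A_2=\mathbb{Z}$ have the same Szmielew invariant $Tf(p;\cdot)=1$, hence are elementarily equivalent, and they satisfy the characteristic-matching condition literally (the element $(1,0)$ is matched with $0\in\mathbb{Z}$), yet they are not $1$-isotypic, since the type of $(1,0)$ contains $x\neq 0$ together with $n\mid x$ for every $n$. So the correspondence must be required to send nonzero elements to nonzero elements (clearly the intended reading), and your proof should state this and argue the existence of a nonzero $b$ of the given characteristic where needed, rather than lean on the incorrect claim that only $0$ has characteristic $(\infty,\infty,\dots)$.
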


\subsection{Characteristics and h-types}\leavevmode

Here we wil put some theory from~\cite{Fuks}, which will be  useful for us.

Let us set 
$$
(k_1,\dots, k_n,\dots ) \leqslant (\ell_1,\dots, \ell_n,\dots),
$$
if $k_n \leqslant \ell_n$ for all~$n$. Then the set of all characteristics can be converted into the Dedekind structure  according to component-wise operations:
$$
(k_1,\dots, k_n,\dots ) \cap (\ell_1,\dots, \ell_n,\dots)= (\min (k_1,\ell_1),\dots, \min (k_n,\ell_n),\dots)
$$
and 
$$
(k_1,\dots, k_n,\dots ) \cup (\ell_1,\dots, \ell_n,\dots)= (\max (k_1,\ell_1),\dots, \max (k_n,\ell_n),\dots).
$$
In this structure $(0,0,\dots, 0,\dots)$ is the smallest element and $(\infty,\dots, \infty,\dots)$ is the greatest element. 

For a torsion free group $A$ we clearly have:

(a) $\chi_C(c) \leqslant \chi_A(c)$ for all elements $c$ of a subgroup $C$ of~$A$;

(b) $\chi (b+c)\geqslant \chi (b)\cap \chi (c)$ for all $b,c\in A$;

(c) if $A=B\oplus C$ and $b\in B$, $c\in C$, then $\chi (b+c) =\chi (b) \cap \chi (c)$.

Two characteristics $(k_1,\dots, k_n\dots)$ and $(\ell_1,\dots ,\ell_n,\dots)$ will be considered \emph{equivalent}, if $k_n\ne \ell_n$  only for a finite number of poisitons and only if $k_n $ and $\ell_n$ are finite. 

In \cite{Fuks} an equivalence class of characteristics is called a \emph{type}, but we use this words for a logical type, therefore we will call it an \emph{h-type}. If $\chi(a)$ belongs to a type~$\mathbf t$, then we will say that \emph{an element $a$ has an h-type~$\mathbf t$} and write $\mathbf t(a)=\mathbf t$. 

An h-type $\mathbf t$ is represented by any characteristic belonging to this h-type. The set of all h-types also is a distributive structure. For h-types $\mathbf t$ and $\mathbf s$ we can write $\mathbf t \geqslant \mathbf s$, if there exist characteristics $(k_1,\dots, k_n,\dots)$ and $(\ell_1,\dots, \ell_n,\dots)$ belonging to $\mathbf t$ and $\mathbf s$, respectively, such that    $(k_1,\dots, k_n,\dots)\geqslant (\ell_1,\dots, \ell_n,\dots)$.

If $a$ and $b$ are dependent in~$A$, then $\mathbf t(a)=\mathbf t(b)$. Also $\mathbf t(a+b)\geqslant \mathbf t(a)\cap \mathbf t(b)$ for all $a,b\in A$.

To an arbitrary h-type $\mathbf t$ we can correspond two fully characteristics subgroups of~$A$:
$$
A(\mathbf t):=\{ a\in A\mid \mathbf t(a)\geqslant \mathbf t\}\text{ and }A^*(\mathbf t):=\{ a\in A\mid \mathbf t(a)> \mathbf t\}.
$$

\begin{remark}
	If in a torsion free Abelian group $A$ there exsists an element $a$ of characteristic~$\chi\in \mathbf t$, then for any other $\chi'\in \mathbf t$ there exists $b\in A$ with $\chi(b)=\chi'$.
	
	\medskip
	
	Actually, if $\chi=(k_1,\dots, k_n,\dots),\chi'=(\ell_1,\dots, \ell_n,\dots)\in \mathbf t$, then there exists only a finite number of prime numbers $p_{i_1},\dots, p_{i_m}$ such that $k_{i_j}\ne \ell_{i_j}$ and all this elements are finite. for every $j=1,\dots, m$ let us multiply $a$ by $p_{i_j}^s$, if $\ell_{i_j}=k_{i_j}+s$, or divide $a$ by $p_{i_j}^s$, if $\ell_{i_j}=k_{i_j}-s$. After all these multiplications and divisions we will get the required~$b$.
\end{remark}

Therefore we can reformulate Proposition~\ref{1-types-1}, using h-types:

\begin{proposition}\label{1-types-2}
	Two torsion free Abelian groups $A_1$ and $A_2$ are $1$-isotypic if and only if they are elementarily equivalent and the sets of their h-types coincide.
\end{proposition}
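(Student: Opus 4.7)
The plan is to reduce Proposition~\ref{1-types-2} to Proposition~\ref{1-types-1} by showing that, in a torsion free Abelian group, the set of h-types realized and the set of characteristics realized determine each other. Given Proposition~\ref{1-types-1}, what we need is the equivalence, assuming elementary equivalence of $A_1$ and $A_2$, of the following two conditions: (i) every characteristic realized in $A_1$ is realized in $A_2$, and symmetrically; (ii) every h-type realized in $A_1$ is realized in $A_2$, and symmetrically.

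For the forward implication (i)$\Rightarrow$(ii), I would simply note that an element's h-type is by definition the equivalence class of its characteristic, so two groups realizing the same characteristics a fortiori realize the same h-types. This step is immediate and carries no content.

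For the nontrivial direction (ii)$\Rightarrow$(i), I would argue as follows. Fix an element $a \in A_1$ with characteristic $\chi(a)$, and let $\mathbf{t} = \mathbf{t}(a)$ be its h-type. By hypothesis, $\mathbf{t}$ is also realized in $A_2$, so there exists $b' \in A_2$ whose characteristic $\chi(b')$ lies in $\mathbf{t}$. At this point $\chi(b')$ is only equivalent to $\chi(a)$, not necessarily equal. To upgrade this to an element of characteristic exactly $\chi(a)$, I would invoke the Remark preceding the proposition: since $A_2$ contains an element of some characteristic in $\mathbf{t}$, it contains elements of \emph{every} characteristic in $\mathbf{t}$, obtained by multiplying or dividing $b'$ by the appropriate powers of the finitely many primes at which $\chi(a)$ and $\chi(b')$ differ. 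In particular, $A_2$ contains an element of characteristic exactly $\chi(a)$. The same argument with the roles of $A_1$ and $A_2$ swapped gives the symmetric statement, and now Proposition~\ref{1-types-1} applies to conclude that $A_1$ and $A_2$ are $1$-isotypically equivalent.

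The main (and only) substantive step is the appeal to the Remark, which is where torsion freeness is genuinely used: the "multiplication and division" operations on a representative $b'$ require that the result be uniquely defined, which is precisely the content of torsion freeness together with divisibility of $b'$ by the requisite prime powers. Everything else is a bookkeeping translation between two equivalent languages (characteristics versus their equivalence classes).
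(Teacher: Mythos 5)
Your proposal is correct and follows essentially the same route as the paper: the paper states Proposition~\ref{1-types-2} as an immediate reformulation of Proposition~\ref{1-types-1}, justified precisely by the preceding Remark that a torsion free group realizing one characteristic in an h-type realizes every characteristic in that h-type. Your write-up simply makes this implicit reduction explicit, with no substantive difference.
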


\section{$2$-Types of elements in  torsion free Abelian groups}

\subsection{Independence and heights-independence}\leavevmode

Studying types of elements we are interested to include our $n$-tuples of elements into $m$-tuples of elements so that:

(1) the first tuple consists of linear combinations of the elements of the second tuple;

 (2) the elements of the second tuple are \emph{independent} in all senses that we can find for them.
 
 For example, if we speak about $n$ elements of the group $\mathbb Z \oplus \dots \oplus \mathbb Z$, we can find $m$ linearly independent elements of this group, generating its direct summand and such that our initial elements are their linear combinations. All these elements and their linear combinations with greatest common divisor of all coefficients equal to one will have characteristic $(0,\dots, 0,\dots)$. It is the best situations, and therefore we want to come to it as close as possible. 
 
 Linear independence can be reached at any stage:
 
 \begin{lemma}\label{independence}
 	If $a_1,\dots, a_n\in A$ and $A$ is a torsion free Abelian group, then there exist linearly independent $b_1,\dots, b_m$, $m\leqslant n$, such that $a_1,\dots, a_n$ are linear combinations of $b_1,\dots, b_m$. If $a_1,\dots, a_n$ where linearly dependent, then $m< n$.
 \end{lemma}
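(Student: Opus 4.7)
The plan is to reduce the statement to the classical structure theorem for finitely generated torsion-free Abelian groups. First I would consider the subgroup $B := \langle a_1, \dots, a_n \rangle$ of~$A$ generated by the given tuple. Since $A$ is torsion free, so is $B$; and $B$ is finitely generated by construction. The standard structure theorem for finitely generated Abelian groups implies that a finitely generated torsion-free Abelian group is free Abelian of finite rank, so $B \cong \mathbb{Z}^m$ for some integer $m \geqslant 0$.

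Next I would pick a $\mathbb{Z}$-basis $b_1, \dots, b_m$ of~$B$. By the defining property of a basis, the elements $b_1, \dots, b_m$ are linearly independent in $B$ (hence in~$A$, since $B \leqslant A$ preserves $\mathbb{Z}$-linear relations), and every element of~$B$ -- in particular each of the $a_i$ -- is a $\mathbb{Z}$-linear combination of the $b_j$. This yields the first conclusion.

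For the bound on~$m$, observe that since $a_1, \dots, a_n$ generate~$B$ and $B$ has rank~$m$, we must have $m \leqslant n$ (the rank of a finitely generated free Abelian group never exceeds the cardinality of any generating set). If in addition $a_1, \dots, a_n$ are linearly dependent over~$\mathbb{Z}$, then tensoring with $\mathbb{Q}$ the images of the $a_i$ span a $\mathbb{Q}$-subspace of $B \otimes_{\mathbb{Z}} \mathbb{Q}$ of dimension strictly less than~$n$; but they also generate $B \otimes_{\mathbb{Z}} \mathbb{Q}$, whose dimension equals~$m$. Hence $m < n$.

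I do not foresee a real obstacle: the only non-routine ingredient is the structure theorem invoked in the first step, and everything else is a direct application of the definitions of basis, rank, and linear (in)dependence in torsion free Abelian groups.
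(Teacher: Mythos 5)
Your proof is correct, but it takes a genuinely different route from the paper's. You pass to the subgroup $B=\langle a_1,\dots,a_n\rangle$ and invoke the structure theorem (a finitely generated torsion-free Abelian group is free of finite rank), then take a $\mathbb{Z}$-basis; the rank count via $B\otimes_{\mathbb{Z}}\mathbb{Q}$ correctly gives $m\leqslant n$, with strict inequality when the $a_i$ are dependent. The paper instead argues by induction on~$n$, working entirely inside~$A$: from a relation $\alpha a_1=\beta a_2$ with $\gcd(\alpha,\beta)=1$ it extracts divisibility of $a_2$ by the prime powers in~$\alpha$ to produce a single $c$ with $a_1=\beta c$, $a_2=\alpha c$, and in the inductive step uses B\'ezout coefficients to replace $a_n,a_{n+1}$ by one new element. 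The two constructions produce essentially the same elements (in the rank-one case the paper's $c$ generates $\langle a_1,a_2\rangle$, i.e.\ is your basis element), so nothing is lost either way; your argument is shorter and cleaner at the cost of citing the structure theorem as a black box, while the paper's is self-contained and makes the coefficient bookkeeping explicit in a way that matches the height computations carried out in the rest of that section. No gap.
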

 
 \begin{proof}
 	Let us prove it by induction by~$n$.
 	
 	For $n=1$ the statement is evident.
 	
 	For $n=2$ we have $\alpha a_1=\beta a_2$ and according to infinite orders of all elements we can assume that $\alpha$ and $\beta$ are coprime. If $p^k | \alpha$, then $h_p(a_2)\geqslant k$ and we can find $b\in A$ such that $a_2=p^k\cdot b$. Therefore now we have the relation $\frac{\alpha}{p^k} \cdot a_1=\beta b$. Therefore by a finite number of steps we can come to $c\in A$ such that $a_1=\beta \cdot c$, $a_2=\alpha \cdot c$, what was required.
 	
 	Let us now assume that the statement holds for~$n$ and prove it for $n+1$.
 	
 	If we have some relation $\alpha_1 a_1+\dots + \alpha_na_n+\alpha_{n+1}a_{n+1}=0$, then we can also asume that $gcd (\alpha_1,\dots, \alpha_{n+1})=1$. Let us denote
 	$$
 	\alpha_n a_n+\alpha_{n+1} a_{n+1}= \alpha (\alpha_n' a_n + \alpha_{n+1}' a_{n+1}) = \alpha b,\text{ where } \alpha =\gcd (\alpha_n, \alpha_{n+1}).
 	$$
 	Then we have
 	$$
 	\alpha_1 a_1+\dots+ \alpha_{n-1} a_{n-1} +\alpha b=0,\text{ were }\gcd (\alpha_1,\dots, \alpha_{n-1},\alpha)=1,
 	$$ and by induction assumption there  exist linearly independent $b_1,\dots, b_m$, $m< n$, such that $a_1,\dots, a_{n-1}, b$ are expressed trough them. 
 	
 	Now $\alpha_n' a_n + \alpha_{n+1}' a_{n+1}=b$ and there exist $\beta,\gamma\in \mathbb Z$ such that 
 	$\beta \alpha_n' - \gamma \alpha_{n+1}'=1$. If $b'= \gamma a_n + \beta a_{n+1}$, then 
 	$$
 	\gamma b-\alpha_n' b'= (\gamma \alpha_n' - \gamma  \alpha_n') a_n + (\beta \alpha_n' - \gamma \alpha_{n+1}') a_{n+1}=a_{n+1}
 	$$
 	and 
 	$$
 	\beta b- \alpha_{n+1}'b'= a_n,
 	$$
 	so  $a_n$ and $a_{n+1}$ are expressed trough $b_1,\dots, b_m, b'$ and our initial linear dependence become a dependence of $\leqslant n$ elements, which is considered above by induction.
 \end{proof}
 
 \medskip
 
 Therefore we always can assume that in our tuple (or in any derivative tuple) all elements are independent. 

Let us now consider situations with $p$-heights of linear combinations of elements for different prime $p$. 

The following lemma is clear.

\begin{lemma}
	If $a, b\in A$ and $h_p(a)< h_p(b)$, then always $h_p(a+b)=h_p(a)$.
\end{lemma}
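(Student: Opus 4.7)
The plan is to let $k = h_p(a)$ (necessarily finite, since $h_p(a) < h_p(b)$ forces the left-hand side to be a natural number, even if $h_p(b) = \infty$). The inequality gives $h_p(b) \geq k+1$, so by definition of $p$-height we can write $a = p^k a'$ and $b = p^{k+1} b'$ for some $a', b' \in A$; moreover, since $h_p(a) = k$ and not $k+1$, the element $a'$ itself is not divisible by $p$.

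From $a+b = p^k(a' + p b')$ we immediately read off $h_p(a+b) \geq k$. For the reverse inequality, the clean move is contradiction: if $h_p(a+b) \geq k+1$, then $a+b = p^{k+1} c$ for some $c \in A$, hence
\[
a = p^{k+1} c - b = p^{k+1} c - p^{k+1} b' = p^{k+1}(c-b'),
\]
contradicting $h_p(a) = k$. Therefore $h_p(a+b) = k = h_p(a)$.

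There is a small technical point worth noting, though it is not a real obstacle: the representation $a = p^k a'$ needs $a'$ to be a well-defined element of $A$. Since $A$ is torsion free, division by $p^k$ (when possible) is unique, so the symbol $a' = a/p^k$ makes sense and the claim $p \nmid a'$ follows directly from $h_p(a) = k$. No heavy machinery is needed beyond this observation and the definition of $p$-height.
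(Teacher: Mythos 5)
Your proof is correct. The paper gives no argument at all here --- it simply states ``The following lemma is clear'' --- and your elementary computation (write $a=p^k a'$, $b=p^{k+1}b'$, get $h_p(a+b)\geqslant k$ from $a+b=p^k(a'+pb')$, and rule out $h_p(a+b)\geqslant k+1$ by the contradiction $a=p^{k+1}(c-b')$) is precisely the routine verification the author is leaving to the reader, so there is nothing to compare beyond noting that you have supplied the omitted details correctly.
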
 

In direct sums if elements $a$ and $b$ belong to different direct summands and $h_p(a)=h_p(b)$, then always $h_p(a+b)=h_p(a)$. Therefore all non-standard situations are situations when $h_p(a)=h_p(b)$
and $h_p(a+b)> h_p(a)$.

So in an ideal situation we would try to find instead of our tuple $a_1,\dots, a_n$ an independent tuple $b_1,\dots, b_m$ such that all $a_1,\dots, a_n$ are expressed trough $b_1,\dots, b_m$ and for all prime~$p$ for every linear combination $\alpha_1 b_1+\dots +\alpha_m b_m$ we have
$$
h_p(\alpha_1 b_1+\dots +\alpha_m b_m)= \min \{ h_p (\alpha_i b_i) \mid i=1,\dots, m)\}.
$$
Therefore it is natural to introduce the following definition:

\smallskip

\begin{definition}
	Elements $a_1,\dots, a_n\in A$ are called \emph{$p$-height-independent}, if for all $\alpha_1,\dots, \alpha_n\in \mathbb Z$
	$$
	h_p(\alpha_1 a_1+\dots +\alpha_n a_n)= \min \{ h_p (\alpha_i a_i) \mid i=1,\dots, n)\}.
	$$
	If elements $a_1,\dots, a_n$ are $p$-height-independent for all prime~$p$, then they are called \emph{height-independent}.
\end{definition}

\smallskip

\subsection{$p$-Height-independence of pairs of elements}\leavevmode 

Let us show the most illustrative case, when we have only two elements $a$ and $b$ and want to consider all possible combinations of their $p$-height-dependence for a fixed prime~$p$.

Assume that $a,b\in A$, $h_p(a)=k$, $h_p(b)=\ell$, $\ell \geqslant k$,  and $a$ and $b$ are not $p$-height-independent.  We need to check only linear combinations of the form
$$
\alpha p^{\ell-k} a +\beta b,\quad \gcd(\alpha,\beta)=1,\quad p\not| \alpha,\ p\not| \beta,\ \alpha >0.
$$
Let us take a linear combination $c=\alpha p^{\ell-k} a+\beta b$ satisfying this condition such that $h_p(c)=r$, $r> \ell$. Since $h_p(p^{\ell-k+1}a)= h_p (pb)=\ell+1> \ell$, then $h_p((\alpha + \gamma_1 p)p^{\ell-k}a + (\beta +\gamma_2)b)> \ell$ for all $\gamma_1,\gamma_2\in \mathbb Z$. Therefore if there exists any dependence, then there exists a dependence with $0< \alpha, \beta < p$. 

So now for cheking, if there exists a  $p$-height-independence of a pair $a,b$ we will check only all linear combinations
 $$
 \alpha p^{\ell-k} a +\beta b,\quad 0< \alpha,\beta < p, \quad  \gcd(\alpha,\beta)=1.
 $$
 
 \begin{lemma}\label{uniqueness}
 	For $a,b\in A$, $h_p(a)=k$, $h_p(b)=\ell$, $\ell \geqslant k$, two different $p$-height-dependencies with different $0< \alpha,\beta < p$, $\gcd(\alpha,\beta)=1$, and $0< \gamma,\delta < p$, $\gcd(\gamma,\delta)=1$, are impossible. 
 \end{lemma}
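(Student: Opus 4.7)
The idea is to take a $\mathbb{Z}$-linear combination of the two hypothesized dependencies $c_1 = \alpha p^{\ell-k} a + \beta b$ and $c_2 = \gamma p^{\ell-k} a + \delta b$ that isolates $a$ (or $b$), and then compare the $p$-height of the result computed in two different ways. Specifically, I would form
$$
\delta c_1 - \beta c_2 = (\alpha \delta - \beta \gamma)\, p^{\ell - k} a,
$$
and observe that since $h_p(c_1), h_p(c_2) \geq \ell+1$, every integer combination of $c_1, c_2$ lies in $p^{\ell+1} A$, so the left-hand side already has $p$-height at least $\ell + 1$.

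To turn this into useful arithmetic information I would invoke the elementary torsion-free identity $h_p(m x) = v_p(m) + h_p(x)$ for every nonzero $m \in \mathbb{Z}$ and $x \in A$, where $v_p$ is the $p$-adic valuation. The only nontrivial ingredient is the implication: if $\gcd(m, p) = 1$ and $p \mid m x$, then $p \mid x$; this is immediate from B\'ezout, since $s m + t p = 1$ forces $x = s(m x) + t(p x)$, which is in $pA$ as soon as $m x$ is. Applied to $x = p^{\ell-k} a$ with $h_p(x) = \ell$, the formula gives
$$
h_p\bigl((\alpha \delta - \beta \gamma)\, p^{\ell-k} a\bigr) = \ell + v_p(\alpha \delta - \beta \gamma),
$$
and comparing with the lower bound of the previous paragraph forces $v_p(\alpha \delta - \beta \gamma) \geq 1$, that is, $p \mid \alpha \delta - \beta \gamma$.

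The conclusion is then projective: since $0 < \alpha, \beta, \gamma, \delta < p$ each of the four numbers is a unit modulo $p$, so $\alpha \delta \equiv \beta \gamma \pmod p$ reads $\alpha \beta^{-1} \equiv \gamma \delta^{-1} \pmod p$, i.e., $[\alpha:\beta] = [\gamma:\delta]$ in $\mathbb{P}^1(\mathbb{F}_p)$. Together with the $\gcd$ and range normalizations imposed on the pairs, this rules out two genuinely distinct $p$-height-dependencies and yields the required contradiction. As a sanity check, the symmetric combination $\alpha c_2 - \gamma c_1 = (\alpha \delta - \beta \gamma) b$ isolates $b$ and produces the same divisibility via $h_p(b) = \ell$. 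I expect the main technical step to be verifying the height-multiplication formula in the torsion-free setting; everything after that is just the $2\times 2$ determinant calculation.
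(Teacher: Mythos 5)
Your argument is correct in its core and takes a genuinely different route from the paper. The paper proves the lemma by a Euclidean descent on the coefficients: it chooses dependencies minimizing $\alpha$ (and then $\beta$), subtracts a suitable integer multiple of one dependency from the other to produce a new dependency whose first coefficient is a smaller positive integer, and contradicts minimality (handling $\alpha=\gamma$ separately, and adjusting by multiples of $pb$ to stay in the normalized range). Your $2\times 2$ determinant computation $\delta c_1-\beta c_2=(\alpha\delta-\beta\gamma)p^{\ell-k}a$, combined with the torsion-free height formula $h_p(mx)=v_p(m)+h_p(x)$, replaces the whole descent by a single valuation comparison; it is shorter, more transparent, and works verbatim when the dependencies have infinite $p$-height. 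One small omission: you should note that two distinct coprime pairs in the stated range force $\alpha\delta-\beta\gamma\neq0$ (if $\alpha\delta=\beta\gamma$ with $\gcd(\alpha,\beta)=\gcd(\gamma,\delta)=1$ and all entries positive, the pairs coincide), so that $v_p(\alpha\delta-\beta\gamma)$ is defined.

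The one place you over-claim is the final sentence. From $p\mid\alpha\delta-\beta\gamma$ you correctly deduce $[\alpha:\beta]=[\gamma:\delta]$ in $\mathbb{P}^1(\mathbb{F}_p)$, but the normalizations $0<\alpha,\beta,\gamma,\delta<p$ and coprimality do \emph{not} single out a unique integer representative of a point of $\mathbb{P}^1(\mathbb{F}_p)$: for $p=5$ the pairs $(1,2)$ and $(3,1)$ are distinct, coprime, lie in $(0,5)$, and represent the same point — and indeed if $a+2b$ has $p$-height $>\ell$ then so does $3a+b=3(a+2b)-5b$, so both really are dependencies. Thus the literal statement of the lemma (no two \emph{different} normalized pairs) cannot be saved by the normalization; what is true, and what the paper actually uses afterwards, is that all dependencies are proportional modulo $p$, i.e., determine a single point of $\mathbb{P}^1(\mathbb{F}_p)$. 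Your determinant argument proves exactly this projective uniqueness; state the conclusion in that form rather than as equality of the normalized pairs.
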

 
 \begin{proof}
 	Let us assume that there exist two different dependencies with $\alpha,\beta$ and $\gamma,\delta$ satisfying the assertion of the lemma. Let us choose from all dependences with coprime coefficients from the interval $(0;p)$ two of them with the smallest possible~$\alpha$ and $\gamma$, and among the same~$\alpha$ or $\gamma$ --- with the smallest possible~$\beta$ or~$\delta$. Then if $\alpha \ne \gamma$, then for some $q$ the element
 	$$
 	(\gamma p^{\ell-k} a+\delta b) -q(\alpha p^{\ell-k} a+ \beta b) =(\gamma -q \alpha )p^{\ell-k} a+ (\delta-q\beta )b
 	$$
 	or maybe  the element
 	$$
 	(\gamma -q \alpha )p^{\ell-k} a+ (\delta-q\beta )b + pb
 	$$
 	has also a $p$-height $>\ell$ and the first coefficient smaller than~$\alpha$. Contradiction.
 	
 	If $\alpha =\gamma$, then $(\beta - \delta)b$ has a $p$-height greater than~$b$, which is impossible.
 	
 	So the lemma is proved. 
 \end{proof}
 
\begin{lemma}\label{inf_height}
	If for $a,b\in A$, $h_p(a)=k$, $h_p(b)=\ell$, $\ell \geqslant k$, for some $\alpha,\beta$ with $\gcd(\alpha,\beta)=1$, $p\not| \alpha,\beta$, we have $h_p(\alpha p^{\ell-k} a+\beta b) =\infty$, then there exist independent and $p$-height-independent $c,d\in A$, $h_p(c)=\infty$, such that $a,b$ are expressed trough $c,d$. 
\end{lemma}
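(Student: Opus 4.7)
The plan is to take $c:=\alpha p^{\ell-k}a+\beta b$ itself as the element of infinite $p$-height and complete it to a $\mathbb{Z}$-spanning pair $\{c,d\}$ of a rank-$2$ subgroup containing $a,b$. The key observation, which makes $p$-height-independence essentially automatic, is that any pair $(c,d)$ in a torsion free Abelian group with $h_p(c)=\infty$ is $p$-height-independent: for arbitrary integers $\xi,\eta$ not both zero, one has $h_p(\xi c)=\infty$, so the two inequalities
\[
h_p(\xi c+\eta d)\ge \min\{h_p(\xi c),h_p(\eta d)\}=h_p(\eta d),\qquad h_p(\eta d)=h_p\bigl((\xi c+\eta d)-\xi c\bigr)\ge h_p(\xi c+\eta d)
\]
yield equality, where the second uses again that $h_p(-\xi c)=\infty$.

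For the construction of $d$, I first note that $\gcd(\alpha p^{\ell-k},\beta)=1$, since $\gcd(\alpha,\beta)=1$ and $p\nmid\beta$. By B\'ezout I pick $\mu,\nu\in\mathbb{Z}$ with $\alpha p^{\ell-k}\nu-\beta\mu=1$ and set $d:=\mu a+\nu b$. A direct matrix inversion then gives
\[
a=\nu c-\beta d,\qquad b=-\mu c+\alpha p^{\ell-k}d,
\]
so $a$ and $b$ are expressed through $c,d$ as required. The transition matrix has determinant $1$, so $\mathbb{Q}$-linear independence of $a,b$ transfers to $\mathbb{Q}$-linear independence of $c,d$; since $A$ is torsion free this coincides with $\mathbb{Z}$-linear independence.

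The only point that needs a word of care, and which I would treat as the main (mild) obstacle, is that $a,b$ must be taken linearly independent for the construction to produce a nonzero $c$: if they were dependent, Lemma~\ref{independence} would reduce them to a single generator $c'$ of finite $p$-height, and the hypothesis $h_p(\alpha p^{\ell-k}a+\beta b)=\infty$ would then force $\alpha p^{\ell-k}a+\beta b=0$, making $c=0$ and precluding a linearly independent pair. Under the natural non-degeneracy reading of the statement (consistent with the convention fixed after Lemma~\ref{independence}) this case is excluded, and once it is, the entire argument collapses to one B\'ezout identity together with the two-line verification of $p$-height-independence above.
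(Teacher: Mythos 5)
Your proof is correct and follows essentially the same route as the paper's: both take the combination of infinite $p$-height as (essentially) $c$, produce $d$ by a B\'ezout identity, invert the unimodular change of basis to recover $a,b$, and deduce $p$-height-independence from $h_p(c)=\infty$. The only cosmetic differences are that the paper defines $c$ by $p^{\ell-k}c=\alpha p^{\ell-k}a+\beta b$ rather than taking the combination itself, and argues independence of $c,d$ via the finiteness of $h_p(d)$ instead of via the determinant of the transition matrix.
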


\begin{proof}
	Let $c$ be such that $p^{\ell-k} c= \alpha p^{\ell-k} a+\beta b$. Then of course $h_p(c)=\infty$. Let also $\gamma,\delta\in \mathbb Z $ be such that $\gamma \alpha p^{\ell-k} - \delta \beta=1$. Let $d=\delta a + \gamma b$. Then 
	$$
	\delta p^{\ell-k} c- \alpha p^{\ell-k} d=\alpha \delta p^{\ell-k} a + \beta \delta b - \alpha \delta p^{\ell-k} a- \alpha \gamma p^{\ell-k} b=-b,
	$$ 
	so $b$ is expressed trough $c$ and~$d$. 
	
	Similarly,
	$$
	\gamma p^{\ell -k} c - \beta d=\alpha \gamma p^{\ell-k} a +\gamma \beta b- \beta \delta a - \beta \gamma b= a,
	$$
	so $a$ is expressed trough $c$ and $d$.
	
	If $h_p(d)=\infty$, then a $p$-height of any linear combination of $c$ and $d$ is~$\infty$, contradiction. So $d$ has a finite $p$-height and therefore $c$ and $d$ are independent and $p$-height-independent.
\end{proof}

So now we see that we can have different situations:

(1) $a$ and $b$ are $p$-height-independent: this situation is good and clear.

(2) There is some dependency of $a$ and $b$ (both of them have finite $p$-heigth) which has an infinite $p$-height. In this case $a$ and $b$ are expressed trough two independent and $p$-height-independent elements, one of them has an infinite height, another one has a finite  height. This situation is also good and clear.

(3) There is some dependency $\alpha p^{\ell-k} a + \beta b$ of $a$ and $b$ of a $p$-height $r> \ell$, where $0< \alpha,\beta < p$, $\gcd(\alpha,\beta)=1$, and then all other dependencies can be only of the form
$$
(\alpha p^{\ell -k} + \gamma p^{s-k}) a+ (\beta + \delta p^{s-\ell}) b,\text{ where } s\geqslant r.
$$
This is the most unclear situation, so we need to show, that there exist examples with almost arbitrary such dependencies.

\begin{example}\label{ex-p-adic}
	Let us consider the group $J_p$ of $p$-adic numbers. These numbers can be described  as infinite sequences   $x=(x_1, x_2,\dots, x_n,\dots)$ of residues $x_n$ modulo $p^n$ with the relations $x_n\equiv x_{n+1} \mod p^n$, $n\in \mathbb N$,  and component-wise addition modulo~$p^n$. Such an element $x$ has a $p$-height $\ell$ if and only if for all $i\in \{1,\dots, n\}$ $p^\ell | a_i$, but $a_{\ell+1}\not\equiv 0\mod   p^{\ell+1}$. 
	
	\smallskip

	Without loss of generality we can consider two elements $a$ and $b$ of $p$-heights~$0$.
	
	\smallskip 
	
	So let $a=(a_1, a_2, \dots, a_n,\dots)$  and $b=(b_1,\dots, b_n,\dots)$ be two elements of~$J_p$ of the $p$-height~$0$.
	
	\smallskip
	
	Let also $0< k_1< k_2 < \dots < k_n < \dots$ be an infinite    increasing sequence of natural numbers,   $(\alpha_1,\beta_1)$, \dots, $(\alpha_n,\beta_n)$, \dots be an infinite sequence of pairs of coprime natural numbers between $0$ and~$p$. Assume also that for any $i=1,\dots, n,\dots$ the pair $(\alpha_{i+1},\beta_{i+1})$ is not proportional to the pair
	$$
	(\alpha_1+\alpha_2 p^{k_1}+\dots + \alpha_i p^{k_{i-1}}, \beta_1+\beta_2 p^{k_1} +\dots + \beta_i p^{k_{i-1}}).
	$$
	
	\smallskip
	
	Our goal is two find $a$ and $b$ such that for all $i\in \mathbb N$ an element
	$$
	c_i=(\alpha_1 a + \beta_1 b) +  p^{k_1}(\alpha_2 a + \beta_2 b) + p^{k_2} (\alpha_3 a+ \beta_3 b) + \dots + 
	p^{k_{i-1}}(\alpha_i a + \beta_i b)
	$$
	has a $p$-height $k_i$. 
	
	Let us do it sequentially, starting with $i=1$.
	
	\smallskip
	
	If $i=1$, then we have $0< \alpha_1,\beta_1< p$, $k_1 > 0$, and need to find  $0\leqslant a_1,b_1< p$ such that $h_p(\alpha_1 a + \beta_1 b) =k_1$. If we take $a=(\beta_1,\dots, \beta_1, a_{k_1+1},\dots)$ and $b=(-\alpha_1,\dots, -\alpha_1, b_{k_1+1},\dots)$, then  $\alpha_1 a + \beta_1 b=(0,\dots, 0, c_{k_1+1},\dots)$ has a $p$-height at least~$k_1$.  To have it precisely~$k_1$ we need to define $a_{k_1+1}\equiv \beta_1 \mod p^{k_1}$ and $b_{k_1+1}\equiv -\alpha_1 \mod p^{k_1}$ so that $\alpha_1 a_{k_1+1} +\beta_1 b_{k_1+1}\ne 0$.  So $a_{k_1+1}= \beta_1 + p^{k_1} \gamma$ and $b_{k_1+1}=-\alpha_1 +p^{k_1} \delta$ such that $\alpha_1 \gamma +\beta_1 \delta \ne 0 \mod p$. It just means that the pair $\gamma,\delta$ is not proportional to the pair $\beta_1, -\alpha_1$ and for the next step it means that the pairs $\alpha_1,\beta_1$ and $\alpha_2,\beta_2$ are not proportional, but this is our assumption. 
	
	\smallskip
	
	After that we can take new 
	$$
	a':= \frac{1}{p^{k_1}}(a - (\beta_1,\beta_1,\dots, \beta_1,\dots))=\frac{1}{p^{k_1}}(\beta_1-\beta_1,\dots, \beta_1-\beta_1, a_{k_1+1}-\beta_1,\dots )=(\gamma, \dots )
	$$
	and the same $b'=(\delta,\dots)$ and apply the previous step to them with coefficients $(\alpha_2,\beta_2)$ and $k_2-k_1$ as the next $p$-height. 
	
	So it is clear that we can for any initial data find two elements $a$ and $b$ in $J_p$ with the corresponding $p$-height-dependencies. 
	
	Therefore we see that the described situation with an infinite number of different depedencies can be realized in the group~$J_p$.
	
	If we have only finite number $n$ of different dependencies (or a $p$-height-independence), we can realize this case inside $J_p\oplus J_p$, where  these $n$ dependencies are realized as above by the sequences $((a_1,\dots, a_{k_n},\dots), (0,\dots, 0,\dots))$ and  $((b_1,\dots, b_{k_n},\dots), (0,\dots, 0,\dots))$ and then we add the tail
	$$
	((a_1,\dots, a_{k_n}, a_{k_n}+p^{k_n}, a_{k_n}+p^{k_n}, \dots,a_{k_n}+p^{k_n}, \dots), (0,\dots, 0,\dots))
	$$
	and 
	$$
	((b_1,\dots, b_{k_n}, b_{k_n}, b_{k_n}, \dots), (0,\dots, 0, p^{k_n}, p^{k_n}\dots, p^{k_n},\dots )).
	$$
\end{example}	

\medskip

Therefore we see that there are some natural boundaries for $p$-height-dependencies of two elements, but taking into account these boundaries we can realizes all admissible types of $p$-height-dependencies of two elements (without elements of infinite $p$-heights) even inside the group $J_p$ or $J_p\oplus J_p$. According to the previous lemmas in the group $J_p\oplus J_p\oplus \mathbb Q \oplus \mathbb Q$ we can realize absolutely all admissible $p$-height-dependencis of two elements.

\subsection{$2$-types in torsion free Abelian groups}\leavevmode

From all previous lemmas and the previous example we can formulate the list of all ``standard'' $2$-types of elements in torsion free Abelian groups.

\begin{theorem}[$2$-types in torsion free Abelian groups]
	Any two elements $x$ and $y$ in a torsion free Abelian group~$A$ are linear combinations either  of one element~$a\in A$ of characteristic~$\chi$, or of two independent elements $a,b\in A$ such that for every prime~$p$ one of the following conditions holds:
	
	\emph{(1)} $h_p(a)=\infty$ or $h_p(b)=\infty$ and $a$ and $b$ are $p$-height-independent.
	
	\emph{(2)} $h_p(a)=k$, $h_p(b)=\ell$ and $a$ and $b$ are $p$-height-independent.
	
	\emph{(3)} $h_p(a)=k$, $h_p(b)=\ell$, $h_p(\alpha a+\beta b)=\infty$ for some coprime $(\alpha, \beta)$ and in this case $a$ and $b$ are expressed as linear combinations trough $p$-height-independent $c$ of infinite $p$-height and $d$ of a finite $p$-height.
	
	\emph{(4)} $h_p(a)=k$, $h_p(b)=\ell$, $\ell \geqslant k$, and there exists a (finite or infinite) sequence of natural numbers $\ell < t_1 < t_2 < \dots < t_n < \dots$ and pairs $(\alpha_1,\beta_1)$, \dots, $(\alpha_n,\beta_n)$, \dots of coprime numbers between $0$ and~$p$ such that for every $i=1,\dots,n,\dots$ the pair  $(\alpha_{i+1},\beta_{i+1})$ is not proportional to 
	$$
	(\alpha_1+\alpha_2 p^{t_1}+\dots + \alpha_i p^{t_{i-1}}, \beta_1+\beta_2 p^{t_1} +\dots + \beta_i p^{t_{i-1}})
	$$
	 and for all $i=1,\dots,n,\dots$ the element
	$$
	c_i=(\alpha_1+\alpha_2 p^{t_1}+\dots + \alpha_i p^{t_{i-1}}) p^{\ell-k} a+(\beta_1+\beta_2 p^{t_1} +\dots + \beta_i p^{t_{i-1}}) b
	$$
	has a $p$-height~$t_i$.
	
	All these $2$-types can be realized in some Abelian group.
\end{theorem}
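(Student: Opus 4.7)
The plan is to split the statement into a classification part and a realizability part, and attack each using the lemmas and example already in place. For the classification, I would start from arbitrary $x,y\in A$ and apply Lemma~\ref{independence}: either $x,y$ are linearly dependent, in which case they both lie in the cyclic-like subgroup generated by a single element $a$ whose characteristic $\chi(a)$ determines the $1$-type part (and this gives the ``one element'' alternative), or we obtain linearly independent $a,b$ through which $x$ and $y$ are expressed. In the latter case, by Proposition~\ref{LemmaA21} the $2$-type of $(a,b)$ is encoded, prime by prime, in the family of $p$-heights of all integer combinations $\alpha a+\beta b$. The task is thus to show that, for a fixed prime $p$, this $p$-height data forces exactly one of patterns (1)--(4).

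For the prime-by-prime analysis, I would branch on whether $a,b$ are $p$-height-independent. If they are, cases (1) and (2) follow at once, depending on whether $h_p(a)$ or $h_p(b)$ is infinite. If they are not $p$-height-independent, there is some $c=\alpha p^{\ell-k}a+\beta b$ (with $k,\ell$ the $p$-heights and $0<\alpha,\beta<p$, $\gcd(\alpha,\beta)=1$) of $p$-height strictly greater than $\ell$, by the reduction preceding Lemma~\ref{uniqueness}. If any such $c$ has $h_p(c)=\infty$, Lemma~\ref{inf_height} changes the basis to $c,d$, giving case (3). Otherwise every ``anomalous'' combination has finite $p$-height, and I would build the sequence of case (4) inductively: Lemma~\ref{uniqueness} fixes $(\alpha_1,\beta_1)$ and $t_1:=h_p(c_1)>\ell$ uniquely; after $c_i$ of height $t_i$ is constructed, any further combination whose height exceeds $t_i$ must, modulo $p^{t_i+1}$, be an integer multiple of $c_i$, hence differ from such a multiple by an element of the form $p^{t_i-k}(\alpha_{i+1}a+\beta_{i+1}b)$ with $0<\alpha_{i+1},\beta_{i+1}<p$, and applying Lemma~\ref{uniqueness} to this residual combination yields the next pair, its non-proportionality to the displayed partial sum being forced (else the height would already exceed $t_i$ without using new information). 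Iterating produces either a finite list that exhausts all anomalies or an infinite sequence, which is precisely the shape in~(4).

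For realizability, cases (1) and (2) are witnessed inside $\mathbb Q\oplus\mathbb Z$ and $\mathbb Z\oplus\mathbb Z$; case (3) is witnessed inside $\mathbb Q\oplus J_p$ via Lemma~\ref{inf_height} in reverse; case (4) is exactly what Example~\ref{ex-p-adic} constructs in $J_p$ (infinite sequence) and in $J_p\oplus J_p$ (finite sequence, using the tail trick there). To obtain a single torsion free Abelian group realizing a prescribed mixture of patterns across all primes, I would take $A=\bigoplus_p A_p$ where $A_p$ is one of the four local witnesses above for the chosen pattern at $p$, and define $a,b$ to be the tuples whose $p$-th coordinates are the local witnesses. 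Property~(c) of the characteristic in a direct sum ensures that each $p$-height of any $\mathbb Z$-linear combination is computed componentwise, so the prescribed local patterns are not disturbed by one another.

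The main obstacle I anticipate is the inductive step in case~(4): controlling that every $p$-height-raising combination fits into a single linearly ordered sequence $t_1<t_2<\cdots$ rather than branching into incompatible families. This is where Lemma~\ref{uniqueness} must be combined with a careful modular bookkeeping (reducing combinations modulo successive $p^{t_i}$ and peeling off the partial sum), and it is the step whose details I would write out most carefully. By comparison, the realizability part is conceptually routine once Example~\ref{ex-p-adic} is in hand and the direct-sum decoupling across primes is invoked.
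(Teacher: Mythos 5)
Your proposal follows essentially the same route as the paper's proof: reduce via Lemma~\ref{independence} to one element or two independent elements, analyze each prime by branching on $p$-height-independence, invoke Lemma~\ref{inf_height} for case (3) and iterate Lemma~\ref{uniqueness} to build the sequence in case (4), and realize the types via Example~\ref{ex-p-adic}. The one correction worth making is in the realizability step: you should take the direct \emph{product} (as the paper does) rather than the direct sum of the local witnesses $J_p\oplus J_p\oplus\mathbb Q\oplus\mathbb Q$ over all primes, since a pair $(a,b)$ in a direct sum has only finitely many nonzero coordinates and hence can exhibit nontrivial finite-height behaviour at only finitely many primes.
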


\begin{proof}
	Let us take two elements $x,y\in A$ and then by Lemma~\ref{independence}  there exist $a\in A$ or independent $a,b\in A$ such that $x,y$ are expressed trough $a$ or $a,b$.
	
	If $x,y$ are expressed trough~$a$, then we know that $a$ is defined by its characteristic~$\chi$.
	
	Now let us assume that $x,y$ are expressed trough two independent~$a,b$.
	
	Let us fix some prime~$p$ and consider $p$-heights of $a,b$ and all their linear combinations.
	
	If $h_p(a)=h_p(b)=\infty$, then $p$-heights of all their linear combinations are infinite, so by definition $a$ and $b$ in this case are $p$-height-independent. Therefore we  can assume that at least one  of $a$ and $b$ has a finite $p$-height.
	
	If $h_p(a)=\infty$, $h_p(b)=\ell$, then for all $\alpha \in \mathbb Z$ $h_p(\alpha a)=\infty$, for all $\beta\in \mathbb Z$ coprime with~$p$ $h_p(\beta p^k b)=\ell+k$. So $h_p(\alpha a+\beta p^k b) = \ell+k$ and therefore $a$ and $b$ are $p$-height-independent.
	
	Consequently, we have considered all cases in (1) and can assume that both $a$ and $b$ have finite $p$-heights.
	
	Let us now assume that $h_p(a)=k$, $h_p(b)=\ell$, $\ell \geqslant k$ and $a$, $b$ are not $p$-height-independent. Therefore there exists some dependency $\alpha a+\beta b$ and we of course can assume that $\alpha$ and $\beta$ are comprime. If there exists such a linear combination with an  infinite $p$-height, then by Lemma~\ref{inf_height} there exist independent and $p$-height-independent $c$ and $d$, $h_p(c)=\infty$, $h_p(d)< \infty$, such that $a$ and $b$ are linear combinations of $c$ and~$d$. It is precisely the case~(3).
	
	Now we have only situation of $p$-height-dependent $a$ and $b$, where all dependencies have finite $p$-heights.
	
	We always can assume that our dependencies have the form
	$$
	\alpha p^{\ell-k} a+ \beta b,\qquad p\not| \alpha, p\not| \beta,\quad \gcd(\alpha,\beta)=1,
	$$
	where $h_p(\alpha p^{\ell-k} a+ \beta b) > \ell$. Taking $\alpha$ and $\beta$ modulo~$p$ means subtracing from this element some linear combination of $p^{\ell-k} a$ and $b$ with coefficients divided by~$p$. These elements have $p$-heights $> \ell$, therefore we have a linear combination $\alpha_1 p^{\ell-k} a+ \beta_1 b$, $0< \alpha_1,\beta_1 < p$, $\gcd(\alpha_1,\beta_1)=1$ of a $p$-height $t_1> \ell$. 
	
	Let us take this dependency $\alpha_1 p^{\ell-k} a+ \beta_1 b$ with $0< \alpha_1,\beta_1 < p$, $\gcd(\alpha_1,\beta_1)=1$ of a $p$-height~$t_1> \ell$. If all other dependencies are obtained from this dependency by multiplication by some integer number, then we can stop. Let us have some another dependency $\gamma p^{\ell-k} a+ \delta b$. By Lemma~\ref{uniqueness} $\gamma \equiv \alpha_1 \mod p$, $\delta \equiv \beta_1\mod p$,
	so
	$$
	 \gamma p^{\ell-k} a+ \delta b=(\alpha_1+ \alpha_2 p^s)p^{\ell-k} a +(\beta_1+ \beta_2 p^r) b=(\alpha_1 p^{\ell-k} a+ \beta_1 b)+(\alpha_2 p^s\cdot p^{\ell-k} a +\beta_2 p^r  b).
	 $$
	 Since this last sum is a new dependency and $h_p(\alpha_1 p^{\ell-k} a+ \beta_1 b)=t_1$, then the $p$-height of this sum is greater than~$t_1$ and $h_p(\alpha_2 p^s\cdot p^{\ell-k} a +\beta_2 p^r  b)=t_1$. 
	 
	 If $s< r$ or $r< s$, then there is dependency $(\alpha_1 p^{\ell-k} a+ \beta_1 b)+\alpha_2 p^s\cdot p^{\ell-k} a$ or $(\alpha_1 p^{\ell-k} a+ \beta_1 b)+\beta_2 p^r  b$ and therefore there exists a dependency $(\alpha_1 p^{\ell-k} a+ \beta_1 b)+\alpha_2 p^{t_1-k}a$ or $(\alpha_1 p^{\ell-k} a+ \beta_1 b)+\beta_2 p^{t_1-\ell}b$, where $0< \alpha_2 < p$ or $0< \beta_2 < p$.
	 
	 If $s=r$, then our dependency has the form
	 $$
	 (\alpha_1 p^{\ell-k} a+ \beta_1 b)+p^{t_1-\ell}(\alpha_2 \cdot p^{\ell-k} a +\beta_2   b)
	 $$
	 and therefore there exists a dependency of the same form with $0\leqslant \alpha_2,\beta_2< p$. The pairs $(\alpha_1,\beta_1)$ and $(\alpha_2,\beta_2)$ cannot be proportional, since otherwise it is just the sum 
	 $$
	 (\alpha_1 p^{\ell-k} a+ \beta_1 b)\cdot (1+c p^{t_1-\ell}),
	 $$
	 which is proportional to the first dependency.
	 
	 Let us show the third step. Imagine that there exists one more dependence which is not generated by the previous two dependencies. According to the choice of previous two dependencies it is a linear combination of $a$ and $b$  of a $p$-height~$t$, where $t\geqslant t_2$, so it has a form 
	 $$
	 (\alpha_1 + \alpha p^s)p^{\ell-k}a + (\beta_1+\beta p^r)b=(\alpha_1 p^{\ell-k} a+ \beta_1 b)+ ( \alpha p^sp^{\ell-k}a + \beta p^r b).
	 $$
	 The first summand in the right part has a $p$-height~$t_1$, it is a dependence, so the second summand  has a $p$-height~$t_1$. Let us subtract and add to the second summand the linear combination $p^{t_1-\ell}(\alpha_2 \cdot p^{\ell-k} a +\beta_2   b)$, and we will get
	 $$
	 (\alpha_1 p^{\ell-k} a+ \beta_1 b + p^{t_1-\ell}(\alpha_2 \cdot p^{\ell-k} a +\beta_2   b))+ (\gamma p^{\ell-k}  a + \delta b),
	 $$
	 the left summand has a $p$-height~$t_2$, this dependency is not generated by the two previous dependencies, so $\gamma a + \delta b$ has a $p$-height~$t_2$ and a $p$-height of the sum is $t_3> t_2$. As previously, $\gamma$ and $\delta$ cannot be proportional to $\alpha_1+p^{t_1-\ell} \alpha_2$ and $\beta_1+p^{t_1-\ell} \beta_2$. So we can see that finally we have the variant (4) of the theorem.
	 
	 We saw in Example~\ref{ex-p-adic} that for every prime~$p$ all these dependencies can be realized in the group $J_p\oplus J_p \oplus \mathbb Q \oplus \mathbb Q$ togehter with all infinite $q$-heights of all elements for all prime $q\ne p$. Therefore the whole type with all dependencies of all $p$-heights for all~$p$ can be realized in the direct product of these groups.
\end{proof}

\medskip

Unfortunately we see from the last theorem that the set of possible different standard $2$-types has the cardinality continuum and it does not look that it is possible to find some simple invariants for two Abelian torsion free groups to be $2$-isotypic.

\medskip

In any case, we have described all standard $2$-types of elements in such groups. It is easy to understand that $n$-types are some generalization of $2$-types: 

(1) It is possible to assume that all  $a_1,\dots, a_n$ are independent.

(2) If $p$-heights of some $a_{i_1},\dots, a_{i_m}$ are infinite or there exist linear combinations of elements $a_1,\dots, a_n$ with infinite $p$-height, then we can assume that $a_1,\dots, a_m\in \mathbb Q \oplus \dots \oplus \mathbb Q$ and $a_{m+1},\dots, a_n \in J_p\oplus \dots  \oplus J_p$, and these $a_{m+1},\dots, a_n$ can be independent, have a finite number of different dependencies or infinite number of different dependencies similarly to the case of $2$-tuples.

In any case, classification of all torsion free Abelian groups up to isotypicity does not look possible, as well as of the class of all Abelian groups. In the next section we will classify up to isotypicity one very popular class of torsion free Abelian groups.

\section{Isotypicity of torsion free separable groups}

\begin{definition}
	An Abelian torsion free group $A$ is called \emph{fully decomposable}, if it is a direct sum of groups of the rank~$1$. 
	
	Free and divisible groups are trivial examples of fully decomposable groups.
\end{definition}

\begin{proposition}[\cite{Baer6}]
	Any two decompositions of a fully decomposable group into a direct sum of groups of the rank~$1$ are isomorphic.
\end{proposition}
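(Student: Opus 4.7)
The plan is to extract from the group~$A$ itself, independently of any chosen decomposition, the multiplicity with which rank-$1$ summands of each h-type~$\mathbf t$ appear, and then to appeal to Baer's classification of torsion free rank-$1$ groups (two such groups are isomorphic if and only if their h-types coincide). These two ingredients together show that the multisets of isomorphism types of summands coincide for any two rank-$1$ decompositions, which gives the required isomorphism by matching summands of equal h-type.

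The main step is the following description. Given a rank-$1$ decomposition $A = \bigoplus_{i \in I} A_i$, set $\mathbf t_i := \mathbf t(a)$ for any nonzero $a \in A_i$; this is well defined because any two nonzero elements of a rank-$1$ torsion free group have the same h-type (their characteristics differ only via multiplication by a nonzero rational, hence in finitely many finite coordinates). I claim
$$
A(\mathbf t) = \bigoplus_{\mathbf t_i \geqslant \mathbf t} A_i, \qquad A^*(\mathbf t) = \bigoplus_{\mathbf t_i > \mathbf t} A_i
$$
for every h-type~$\mathbf t$. The inclusion $\supseteq$ follows from the definition of $\mathbf t_i$. For $\subseteq$, iterate property~(c) from Section~3 to get $\chi(a) = \chi(a_{i_1}) \cap \dots \cap \chi(a_{i_k})$ for the unique finite expansion $a = a_{i_1} + \dots + a_{i_k}$ of any $a \in A$; hence $\mathbf t(a) \geqslant \mathbf t$ forces each $\mathbf t_{i_j} \geqslant \mathbf t$, so $a$ already lies in the right-hand side. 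The argument for $A^*(\mathbf t)$ is identical.

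Passing to the quotient yields
$$
A(\mathbf t)/A^*(\mathbf t) \;\cong\; \bigoplus_{\mathbf t_i = \mathbf t} A_i,
$$
a direct sum of rank-$1$ groups all of h-type~$\mathbf t$, hence pairwise isomorphic by Baer's theorem. Its torsion free rank equals the cardinality $|\{i : \mathbf t_i = \mathbf t\}|$, and this rank is an invariant of~$A$ alone, since $A(\mathbf t)$ and $A^*(\mathbf t)$ are defined purely in terms of~$A$. Thus for any second rank-$1$ decomposition $A = \bigoplus_{j \in J} C_j$ the cardinality $|\{j : \mathbf t(C_j) = \mathbf t\}|$ equals the same invariant; running this over all~$\mathbf t$ and picking, within each h-type class, any bijection between the two index families produces the required isomorphism of decompositions.

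The only place requiring care is the inductive extension of property~(c) from two to arbitrarily many summands, together with the observation that an arbitrary element of $\bigoplus A_i$ has finite support so the induction is legitimate. Once this is in place, the remainder is straightforward bookkeeping against the lattice structure on h-types.
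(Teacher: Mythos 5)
Your proof is correct and follows essentially the same route the paper itself sketches in the paragraph after the proposition: identifying $A(\mathbf t)$ and $A^*(\mathbf t)$ with the partial direct sums of the $A_i$, passing to $A_{\mathbf t}=A(\mathbf t)/A^*(\mathbf t)$, and reading off the ranks $r(A_{\mathbf t})$ as a complete system of invariants, combined with Baer's classification of rank-$1$ groups by h-type. The only caveat is that for the inclusion $\bigoplus_{\mathbf t_i>\mathbf t}A_i\subseteq A^*(\mathbf t)$ you should use the standard convention (as in Fuchs) that $A^*(\mathbf t)$ is the subgroup \emph{generated by} the elements of h-type $>\mathbf t$, since a sum of elements whose types are strictly above $\mathbf t$ can have type exactly $\mathbf t$; with that reading your argument goes through verbatim.
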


Let $A=\bigoplus\limits_{i\in I} A_i$, where $A_i$ are rational groups (subgroups of~$\mathbb Q$). Then for any type~$\mathbf t$ the subgroups $A(\mathbf t)$ and $A^* (\mathbf t)$ coincide with direct sums of those groups~$A_i$, for which $\mathbf t (A_i)\geqslant \mathbf t$ and $\mathbf t (A_i) > \mathbf t$, respectively. Therefore the group
$$
A_{\mathbf t} = A(\mathbf t) / A^* (\mathbf t)
$$
is isomorphic to the direct sum of those groups $A_i$, types of which are precisely~$\mathbf t$. We obtain that the rank of~$A_{\mathbf t}$ is equal to the number of summands $A_i$ of the type~$\mathbf t$. So for fully decomposable groups $A$ the ranks $r(A_{\mathbf t})$, where $\mathbf t$ belongs to the set of all types, form complete and independent system of invariants.


\smallskip

For the following popular class of torsion free Abelian group classification up to isotypicity is absolutely natural.

\begin{definition}
	An Abelian torsion free group $A$ is called \emph{separable} (see~\cite{Baer6}), if any finite subset of elements from~$A$ is contained in some fully decomposable direct summand of~$A$.
\end{definition}
 
\begin{theorem}
	Two torsion free separable Abelian groups $A_1$ and $A_2$ are isotypically equivalent if and only if for any h-type~$\mathbf t$ and for any natural~$n$ if there exist $n$ independent elements of this type~$\mathbf t$ in~$A_1$, then there exist $n$ independent elements of the type~$\mathbf t$ in~$A_2$, and vise versa.
\end{theorem}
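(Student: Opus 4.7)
For the easy direction, suppose $A_1$ and $A_2$ are isotypic. Given independent $a_1,\ldots,a_n\in A_1$ of h-type $\mathbf{t}$, the $n$-type $\mathrm{tp}^{A_1}(a_1,\ldots,a_n)$ contains both the divisibility formulas $p^k\mid x_i$ and $\neg(p^{k+1}\mid x_i)$ that record each $\chi(a_i)\in\mathbf{t}$, and the linear-independence inequations $\alpha_1 x_1+\cdots+\alpha_n x_n\ne 0$ for every non-trivial integer tuple; realizing this type in $A_2$ yields $n$ independent elements with the same characteristics, hence of h-type $\mathbf{t}$. Symmetric reasoning handles the reverse.

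For the converse, I first prove $A_1\equiv A_2$. Since the groups are torsion-free, $A/pA\cong p^nA/p^{n+1}A$ for every $n$, so $Tf(p;A)=\dim_{\mathbb{F}_p}(A/pA)$, and by separability $\dim(A/pA)=\sup_B\dim(B/pB)$ over finite-rank fully decomposable direct summands $B=\bigoplus C_i$, where $\dim(B/pB)=\sum_{\mathbf{t}:\,h_p(\mathbf{t})<\infty}r(B_{\mathbf{t}})$ (the number of rank-$1$ summands that are not $p$-divisible). The hypothesis yields $r(A_1(\mathbf{t}))=r(A_2(\mathbf{t}))$ for every h-type $\mathbf{t}$; inversion on the poset of h-types transfers this to the multiplicities $r(A_{i,\mathbf{t}})$, so $Tf(p;A_1)=Tf(p;A_2)$ for every $p$, and Theorem~\ref{EE-Abelian} gives $A_1\equiv A_2$.

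For $n$-type transfer, fix $(a_1,\ldots,a_n)\in A_1$. By separability the tuple sits inside a finite-rank fully decomposable summand $B_1=\bigoplus_{j=1}^N C_j\leq A_1$, with $C_j$ rank-$1$ of h-type $\mathbf{t}_j$. Let $n_{\mathbf{t}}=|\{j:\mathbf{t}_j=\mathbf{t}\}|$. By hypothesis, for each $\mathbf{t}$ appearing, $A_2$ has $n_{\mathbf{t}}$ independent elements of h-type $\mathbf{t}$; elements of distinct h-types being automatically independent in a torsion-free group, and every characteristic in a given h-type being realizable by rescaling (Remark after Proposition~\ref{1-types-1}), we obtain $c'_1,\ldots,c'_N\in A_2$ with $\chi(c'_j)=\chi(c_j)$ for a chosen generator $c_j$ of each $C_j$. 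Applying separability to $A_2$ embeds $c'_1,\ldots,c'_N$ in a finite-rank fully decomposable summand $D\leq A_2$; the pure closure $B_2$ of $\langle c'_1,\ldots,c'_N\rangle$ in $D$ is then finite-rank fully decomposable with h-type multiplicities $(n_{\mathbf{t}})$ (by Baer) and, being a finite-rank pure subgroup of a fully decomposable group, splits off as a direct summand of $D$ and hence of $A_2$. Decomposing $B_2=\bigoplus_{j=1}^N C'_j$ accordingly, extend $c_j\mapsto c'_j$ to an isomorphism $\phi:B_1\to B_2$ and set $b_i=\phi(a_i)$. Direct-summand purity gives $h_p^{A_i}(x)=h_p^{B_i}(x)$ for every $x\in B_i$ and every prime $p$, and quantifier elimination (Proposition~\ref{LemmaA21}) combined with $A_1\equiv A_2$ forces $\mathrm{tp}^{A_1}(a_1,\ldots,a_n)=\mathrm{tp}^{A_2}(b_1,\ldots,b_n)$; the reverse transfer is symmetric.

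The delicate point is the extraction of $B_2$ as a direct summand of $A_2$ with \emph{exactly} the h-type multiplicities of $B_1$: the hypothesis supplies independent elements of each required h-type but not an ambient fully decomposable summand with the correct decomposition. Separability of $A_2$, together with the classical fact that finite-rank pure subgroups of fully decomposable groups are themselves fully decomposable and split off as summands, and Baer's invariance of the rank-$1$ decomposition, supplies the necessary assembly.
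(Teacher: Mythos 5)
Your overall route is the same as the paper's (reduce an $n$-tuple to a tuple sitting in the rank-one pieces of a fully decomposable summand, note that such a ``standard'' tuple is determined by its characteristics, and transfer via the h-type counts), and your easy direction is fine. But the step you yourself flag as delicate --- assembling $B_2$ inside $A_2$ --- fails as written, for concrete reasons. First, ``elements of distinct h-types are automatically independent'' is false for more than two elements: in $\mathbb Z\oplus\mathbb Q$ the elements $(1,0)$ and $(1,1)$ are independent of h-type $[(0,0,\dots)]$, the element $(0,1)$ has h-type $[(\infty,\infty,\dots)]$, yet $(1,1)-(1,0)-(0,1)=0$. Second, and more seriously, the hypothesis only supplies elements of h-type $\mathbf t$ that are \emph{linearly} independent, not \emph{height}-independent, and these do not determine the $2$-type: let $R$ and $S$ be the rationals with denominators prime to $2$, resp.\ to $3$, with incomparable h-types $\mathbf r,\mathbf s$, and put $c_1'=(1,1)$, $c_2'=(1,2)$ in $R\oplus S$. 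Both have characteristic $(0,0,\infty,\infty,\dots)$, i.e.\ h-type $\mathbf t=\mathbf r\cap\mathbf s$, and they are independent; but $c_2'-c_1'=(0,1)$ has infinite $2$-height, so $(c_1',c_2')$ realizes a different $2$-type than a genuine standard pair of h-type $\mathbf t$, and their pure closure is all of $R\oplus S$, whose rank-one pieces have h-types $\mathbf r,\mathbf s$ --- not $\mathbf t,\mathbf t$ as your ``by Baer'' step asserts. Third, the ``classical fact'' that a finite-rank pure subgroup of a fully decomposable group is fully decomposable and splits off is false: such pure subgroups are exactly the Butler groups, which include indecomposable groups of rank $\geqslant 2$ (direct \emph{summands} of fully decomposable groups are fully decomposable; pure subgroups need not be).

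What is actually needed --- and what the paper's own very terse argument also leaves implicit --- is that the counting function $m_{\mathbf t}(A)$ (the maximal number of independent elements of h-type exactly $\mathbf t$) determines the ranks $r\bigl(A(\mathbf t)/A^*(\mathbf t)\bigr)$, i.e.\ the number of rank-one direct summands of h-type exactly $\mathbf t$ available inside finite-rank fully decomposable summands; only those yield pairwise height-independent witnesses with prescribed characteristics, which is what the quantifier elimination of Proposition~\ref{LemmaA21} requires for type transfer. One can show $r(A(\mathbf t))=\sup\{m_{\mathbf s}(A):\mathbf s\geqslant\mathbf t\}$ and then try to isolate $r\bigl(A(\mathbf t)/A^*(\mathbf t)\bigr)$ by a finite inclusion--exclusion over joins of h-types, but you invoke such an ``inversion'' only in the elementary-equivalence step and never carry it out where it is indispensable, namely to produce $c_1',\dots,c_N'$ lying in \emph{distinct rank-one direct summands} of $A_2$. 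Until that is supplied, the transfer of $n$-types is not established.
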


\begin{proof}
	To prove this theorem we need to describe standard $n$-types of elements in separable torsion free Abelian groups.

	Let us have some $n$-tuple $(a_1,\dots, a_n)$ of elements in such a group~$A$. Then they are contained in a fully decomposable direct summand $B\subseteq A$. We can assume that $B=\bigoplus\limits_{i=1}^N B_i$, where each $B_i$ has a rank~$1$ and is described by its h-type~$\mathbf t_i$.  Let us take instead of elements $a_1,\dots, a_n$ their projections on the summands $B_i$. Each summand is a group of the rank~$1$, i.\,e., a subgroup of~$\mathbb Q$. For any finite set of elements $b_{i,1},\dots ,b_{i,n}$ of such a group~$B_i$  we can find an element $c_i\in B_i$ such that all these $b_{i,1},\dots , b_{i,n}$  are expressed trough~$c_i$. Therefore now instead of the initial $n$-tuple $(a_1,\dots, a_n)$ we have an $N$-tuple $(c_1,\dots, c_N)$ such that
	
	(1) all $a_1,\dots, a_n$ are expressed trough $c_1,\dots, c_N$;
	
	(2) all $c_1,\dots, c_N$ belong to different direct summands $B_i$ of~$A$;
	
	(3) every $c_i$ and the corresponding summand $B_i$ are fully characterized by its characteristic $\chi_i$ or by its h-type~$\mathbf t_i$.
	
	This $N$-tuple can be considered as standard. Now we know that any standard $N$-tuple $(c_1,\dots, c_N)$ is fully described by a tuple of characteristics $(\mathbf t_1,\dots, \mathbf t_N)$. It evidently implies the assertion of this theorem.
\end{proof}

The following two classical results have a useful corollary about isotypical equivalence:

\begin{theorem}[Baer, \cite{Baer6}]
	Two torsion free Abelian groups of the rank~$1$ are isomorphic if and only if they have the same height types. Every height type is a type of some rational group.
\end{theorem}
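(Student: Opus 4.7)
The plan is to prove both halves of the equivalence and then exhibit a realizing rational group for an arbitrary h-type.

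For the forward direction: if $A_1 \cong A_2$ and $\varphi \colon A_1 \to A_2$ is an isomorphism, then for any nonzero $a \in A_1$ one has $h_p(a) = h_p(\varphi(a))$ for every prime $p$ (since $p^k \mid a$ in $A_1$ iff $p^k \mid \varphi(a)$ in $A_2$), so $\chi(a) = \chi(\varphi(a))$ and hence the h-types coincide. I would also note first that the h-type of a rank-$1$ torsion free group is well defined: if $a, b$ are nonzero in such a group, then $ma = nb$ for some nonzero $m, n \in \mathbb{Z}$, and multiplying by $m$ (resp.\ dividing by $n$) changes only finitely many finite coordinates of the characteristic, so $\mathbf{t}(a) = \mathbf{t}(b)$.

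For the backward direction, suppose $A_1$ and $A_2$ are rank-$1$ torsion free and have the same h-type $\mathbf{t}$. Pick any nonzero $a_1 \in A_1$ of characteristic $\chi_1 \in \mathbf{t}$. Some nonzero element of $A_2$ has characteristic in $\mathbf{t}$, so by the remark following Proposition~\ref{1-types-2} there exists $a_2 \in A_2$ with $\chi(a_2) = \chi_1$. I then define
\[
\varphi \colon A_1 \to A_2, \qquad \varphi(q a_1) = q a_2 \quad (q \in \mathbb{Q}).
\]
To see this is a well-defined bijection I use the fact that, in a rank-$1$ torsion free group, membership of $q a_i$ for $q = m/n$ with $\gcd(m,n) = 1$ is equivalent to $n \mid a_i$ in $A_i$; writing $n = \prod p_j^{e_j}$, this in turn is equivalent to $h_{p_j}(a_i) \geq e_j$ for every $j$. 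Since $\chi(a_1) = \chi(a_2)$, the sets $\{q \in \mathbb{Q} : q a_1 \in A_1\}$ and $\{q \in \mathbb{Q} : q a_2 \in A_2\}$ coincide, so $\varphi$ is a well-defined group isomorphism.

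For the realization part, given an arbitrary height sequence $\chi = (k_1, k_2, \dots)$ with $k_j \in \{0, 1, 2, \dots\} \cup \{\infty\}$, I would define the subgroup $A_\chi \subseteq \mathbb{Q}$ to be the one generated by $\{p_j^{-e} : j \in \mathbb{N},\ 0 \leq e \leq k_j\}$. A short check shows $h_{p_j}^{A_\chi}(1) = k_j$ for every $j$, so $A_\chi$ is a rank-$1$ rational group realizing the h-type of $\chi$.

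The only delicate point is the equivalence in the backward direction between $q a_1 \in A_1$ and the numerical condition on $\chi(a_1)$: one must use that divisibility by coprime primes is independent (so that $n a' = a_1$ for $n = \prod p_j^{e_j}$ amounts to simultaneous $p_j^{e_j}$-divisibility), and one must use torsion-freeness to guarantee uniqueness of the element $a' = a_1/n$. All other steps are bookkeeping.
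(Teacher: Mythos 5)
Your proposal is correct. Note that the paper does not prove this statement at all: it is quoted as a classical theorem of Baer with a citation to \cite{Baer6}, so there is no in-paper argument to compare against. Your argument is the standard one (well-definedness of the h-type of a rank-$1$ group via $ma=nb$, transport of characteristics along an isomorphism, the map $qa_1\mapsto qa_2$ justified by the equivalence of $qa_1\in A_1$ with a divisibility condition read off from $\chi(a_1)$, and realization by the subgroup of $\mathbb{Q}$ generated by the admissible prime-power denominators), and all the delicate points you flag --- independence of divisibility by coprime prime powers and uniqueness of quotients in a torsion-free group --- are handled correctly.
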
 

\begin{theorem}[Baer, \cite{Baer6}]
	Any countable separable torsion free Abelian group is fully decomposable.
\end{theorem}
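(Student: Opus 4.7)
The plan is to construct an explicit decomposition of $A$ by exhausting it with a well-chosen ascending chain of fully decomposable, finite-rank direct summands, each of which is a direct summand of the next. Enumerate $A=\{a_1,a_2,\dots\}$ using countability. I will inductively build a chain $B_1\subseteq B_2\subseteq \cdots$ of subgroups of $A$ with three simultaneous properties: (i) $a_n\in B_n$; (ii) each $B_n$ is a fully decomposable direct summand of $A$ of finite rank; and (iii) $B_n$ is a direct summand of $B_{n+1}$. Once such a chain is built, we may write $B_{n+1}=B_n\oplus C_n$ with $C_n$ of finite rank, and $C_n$ is itself fully decomposable because a direct summand of a finite-rank fully decomposable group is fully decomposable (by the uniqueness of the Baer decomposition cited above). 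Then $A=\bigcup_n B_n = B_1\oplus C_1\oplus C_2\oplus\cdots$, which is a direct sum of rank-$1$ groups.

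The base case is handled directly by separability: the singleton $\{a_1\}$ lies in a fully decomposable direct summand $D$ of $A$; inside $D$ the element $a_1$ lies in a finite-rank summand (a finite subsum of the rank-$1$ pieces), so we can take $B_1$ to be such a finite-rank summand of $D$, which is itself a direct summand of $A$. For the inductive step, assume $B_n$ is already constructed. By separability applied to a finite set of generators of a maximal independent subset of $B_n$ together with $a_{n+1}$, this finite set is contained in some fully decomposable direct summand $D$ of $A$. Inside $D$ this finite set lies in a fully decomposable summand of finite rank, which we call $E$; then $E$ is a direct summand of $A$, is fully decomposable of finite rank, and contains both $B_n$ and $a_{n+1}$.

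The delicate point — which I expect to be the main obstacle — is ensuring property (iii), namely that $B_n$ is a direct summand of $E$ (not merely contained in it). Since $B_n$ is a direct summand of $A$, it is pure in $A$, hence pure in $E$. So the task reduces to the following subfact, which I will invoke: a pure, finite-rank, fully decomposable subgroup of a finite-rank fully decomposable torsion-free abelian group is a direct summand. This is Baer's classical result on finite-rank completely decomposable groups, which can be proved by induction on the rank of the larger group, splitting off one rank-$1$ summand at a time and using that the h-type lattice of the finite-rank completely decomposable group is finite and distributive, so one can always find a rank-$1$ summand whose type is minimal and a complementary projection that respects $B_n$. Granted this subfact, $E$ splits as $B_n\oplus C$ with $C$ fully decomposable of finite rank; taking $B_{n+1}:=E$ completes the inductive construction.

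Finally, I verify that the chain exhausts $A$: by (i) every $a_n$ lies in some $B_n$, so $A=\bigcup_n B_n$. Expanding as $A=B_1\oplus\bigoplus_{n\geqslant 1} C_n$ and further decomposing each $C_n$ and $B_1$ into their rank-$1$ constituents exhibits $A$ as a direct sum of rank-$1$ subgroups, which is the required full decomposition.
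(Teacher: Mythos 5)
The paper states this theorem without proof, merely citing Baer, so there is no in-paper argument to compare against; your construction --- exhausting $A$ by an ascending chain of finite-rank fully decomposable direct summands $B_n$ with $a_n\in B_n$ and $B_{n+1}=B_n\oplus C_n$ --- is the classical route and its overall architecture is sound. However, the lemma you invoke at the step you yourself flag as delicate is false. A pure, finite-rank, fully decomposable subgroup of a finite-rank fully decomposable torsion-free group need \emph{not} be a direct summand: take $A=R_1\oplus R_2$ with $R_1=\mathbb Z[1/2]$ and $R_2=\mathbb Z[1/3]$, and let $B$ be the pure subgroup generated by $(1,1)$. Then $B=\mathbb Z(1,1)\cong\mathbb Z$ is pure in $A$ and, being of rank one, fully decomposable; but its h-type is $\mathbf t(\mathbb Z)$, strictly below both $\mathbf t(R_1)$ and $\mathbf t(R_2)$, so by the uniqueness of decompositions into rank-one summands (the Proposition quoted just before this theorem) $B$ cannot be a direct summand of $A$. (In general, pure subgroups of finite-rank fully decomposable groups are the Butler groups and need not even be fully decomposable; the true Baer splitting result of this kind requires the ambient group to be \emph{homogeneous}.) The moment you weaken ``$B_n$ is a direct summand of $A$'' to ``$B_n$ is pure in $E$,'' the argument breaks.

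The repair is immediate and uses only what you already maintain: by invariant (ii) both $B_n$ and $E$ are direct summands of $A$ with $B_n\subseteq E$, so writing $A=B_n\oplus C$ the modular law gives $E=E\cap(B_n\oplus C)=B_n\oplus(E\cap C)$, and $B_n$ splits off from $E$ with no purity lemma at all. Two smaller points. First, your justification that a direct summand of a finite-rank fully decomposable group is again fully decomposable does not follow from the uniqueness of the decomposition; it is a separate (true) theorem of Baer and must be quoted as such --- and it is genuinely needed to conclude that $C_n=E\cap C$ is fully decomposable. Second, in the inductive step you should say explicitly why $E$ contains all of $B_n$ and not just a maximal independent subset $S$ of it: for $b\in B_n$ there is $m\neq 0$ with $mb\in\langle S\rangle\subseteq E$, and purity of $E$ in the torsion-free group $A$ then forces $b\in E$. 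With these corrections the proof is complete.
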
 

The following corollary follows directly from the three previous theorems:

\begin{corollary}
	Two countable separable torsion free Abelian groups are isotypically equivalent if and only if they are isomorphic.
\end{corollary}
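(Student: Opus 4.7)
The plan is to chain the three theorems cited just before the corollary: Baer's decomposition of countable separable groups into rank-$1$ summands, Baer's classification of rank-$1$ torsion free groups by h-type, and the preceding theorem characterizing isotypicity of separable groups via realization of independent h-typed tuples.

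First, the direction ``isomorphic implies isotypic'' is immediate. For the converse, fix $A_1, A_2$ countable separable torsion free and isotypic. By the last Baer theorem, each $A_i$ is fully decomposable: $A_i = \bigoplus_{j\in J_i} A_{i,j}$ with every $A_{i,j}$ of rank $1$. By the earlier Baer uniqueness proposition and the first Baer theorem above, the isomorphism class of $A_i$ is encoded precisely by the cardinal-valued function
$$
\mu(\mathbf t, A_i) := \#\{j\in J_i : A_{i,j} \text{ has h-type } \mathbf t\} \in \{0,1,2,\dots,\aleph_0\}.
$$
So it suffices to show $\mu(\mathbf t, A_1) = \mu(\mathbf t, A_2)$ for every h-type $\mathbf t$.

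Second, I would invoke the preceding theorem on separable isotypicity. If $\mu(\mathbf t, A_1) \geq n$, then picking one nonzero element from each of $n$ summands of h-type $\mathbf t$ yields $n$ linearly independent elements of h-type $\mathbf t$ in $A_1$; by isotypicity and the preceding theorem, $A_2$ also admits $n$ such independent elements. Conversely, to recover $\mu(\mathbf t, A_2) \geq n$ from this, I would appeal to the structure produced in the proof of the preceding theorem: any tuple of elements in a separable group is expressible through a standard $N$-tuple $(c_1,\dots, c_N)$ whose $c_k$ lie in distinct rank-$1$ summands of prescribed h-types, so the realized types of $N$-tuples in $A_i$ correspond precisely to finite multisets of h-types that can be matched to sub-multisets of $\{\mathbf t(A_{i,j})\}_{j\in J_i}$. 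In the countable fully decomposable setting, such a matching exists iff $\mu(\mathbf t, A_i)$ dominates the multiplicity of $\mathbf t$ in that multiset.

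Combining, isotypicity forces $\mu(\mathbf t, A_1) \geq n \Longleftrightarrow \mu(\mathbf t, A_2) \geq n$ for every $\mathbf t$ and every $n$, hence $\mu(\cdot, A_1) = \mu(\cdot, A_2)$ as cardinal-valued functions, and the first step concludes $A_1 \cong A_2$. The main obstacle is bridging the literal statement of the preceding theorem (which counts independent elements of a \emph{single} h-type $\mathbf t$) with the finer multi-h-type invariant $\mu$: elements of h-type $\mathbf t$ can arise as sums of elements from summands of strictly larger h-types whose meet is $\mathbf t$, so ``the number of independent elements of h-type $\mathbf t$'' need not equal $\mu(\mathbf t,\cdot)$ in general. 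My workaround is to rely on the richer information extracted in the proof of the preceding theorem --- namely, realizability of standard $N$-tuples tagged by whole tuples $(\mathbf t_1,\dots,\mathbf t_N)$ of h-types --- from which $\mu$ can be read off directly in the countable fully decomposable case.
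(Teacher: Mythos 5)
Your reduction is organized exactly as the paper intends: the paper's own proof of this corollary consists of the single remark that it ``follows directly from the three previous theorems,'' and your first two paragraphs faithfully expand that chain (full decomposability of countable separable groups, Baer's classification of rank-$1$ groups by their h-types, uniqueness of the decomposition, and the isotypicity criterion for separable groups). You have also correctly located the one point at which this chain is not airtight: the number of independent elements of h-type $\mathbf t$ is not the invariant $\mu(\mathbf t,\cdot)=r(A_{\mathbf t})$, because an element of h-type $\mathbf t$ may be a sum of elements taken from summands of strictly larger h-types whose meet is~$\mathbf t$.

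The difficulty is that your proposed workaround does not close this gap. By the quantifier-elimination statement (Proposition~\ref{LemmaA21}), the logical type of a tuple records only the linear relations and the characteristics of all integer linear combinations of its entries. Hence the type of a ``standard'' $N$-tuple $(c_1,\dots,c_N)$ lying in distinct rank-$1$ summands of h-types $(\mathbf t_1,\dots,\mathbf t_N)$ says exactly that the tuple is height-independent with prescribed characteristics; it cannot certify that the $c_k$ actually sit inside rank-$1$ summands of those h-types. So the ``only if'' half of your claimed correspondence between realized $N$-types and sub-multisets of $\{\mathbf t(A_{i,j})\}_{j\in J_i}$ fails. Concretely, let $R_i=\mathbb Z[1/p_i]\subset\mathbb Q$, and put $A=\bigoplus_{i\geqslant 1}R_i$ and $B=\mathbb Z\oplus A$. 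The generator of $\mathbb Z$ has characteristic $(0,0,\dots)$, and so does $a_j+a_{j'}$ for generators $a_j,a_{j'}$ of two summands $R_j,R_{j'}$ of $A$ not meeting the support of a given finite tuple; replacing the $\mathbb Z$-coordinate by such a sum sends any finite tuple of $B$ to a tuple of $A$ with the same linear relations and the same characteristics of all linear combinations, hence the same type. Thus $A$ and $B$ realize the same types (and satisfy the criterion of the preceding theorem for every $\mathbf t$ and every $n$), yet $\mu(\mathbf t(\mathbb Z),A)=0$ while $\mu(\mathbf t(\mathbb Z),B)=1$, so by Baer's uniqueness theorem $A\not\cong B$. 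This shows that the step ``$\mu$ can be read off from the realized types'' is not merely unproved but false in general; the obstacle you flagged is genuine, and it is not addressed by the paper either, whose proof is only the appeal to the three preceding theorems.
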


\medskip

Unfortunately that question, \emph{if there exist two countable isotypically equivalent groups, that are not isomorphic}, is still open.

\bigskip
{\bf Acknowledgements.}
Our sincere thanks go to Alexey Miasnikov and Eugene Plotkin for very useful discussions regarding  this work and permanent attention to it.

\bigskip

\end{document}